\newcommand{\Z}{\mathbb{Z}}
\newcommand{\N}{\mathbb{N}}
\newcommand{\R}{\mathbb{R}}
\renewcommand{\i}{\mathfrak{i}}
\renewcommand{\d}{\mathrm{d}}
\def\Var{{\mathbf{Var\,}}}
\def\E{\mathbb{E}}
\def\p{\mathbb{P}}
\theoremstyle{plain}
\newtheorem{lemma}{Lemma}
\newtheorem{theorem}{Theorem}
\theoremstyle{remark}
\newtheorem{remark}{Remark}
\begin{document}

\title{Tail probabilities of St.~Petersburg sums, trimmed sums, and their
limit}

\author{
Istv\'an Berkes
\thanks{Institute of Statistics, Graz University of Technology,
M\"unzgrabenstra{\ss}e 11/III, A-8010 Graz, Austria
\texttt{berkes@tugraz.at}.
}
\and
L\'aszl\'o Gy\"orfi
\thanks{Department of Computer Science and Information Theory, Budapest University
of Technology and Economics, 1521 Stoczek u. 2, Budapest, Hungary,
\texttt{gyorfi@cs.bme.hu}.
} \and P\'eter Kevei
\thanks{Center for Mathematical Sciences, Technische Universit\"at M\"unchen,
Boltzmannstra{\ss}e 3, 85748 Garching, Germany,
\texttt{peter.kevei@tum.de}.
} }

\maketitle

\begin{abstract}
We provide exact asymptotics for the tail probabilities $\p \{ S_{n,r} > x \}$ 
as $x \to \infty$, for fix $n$, where $S_{n,r}$ is the $r$-trimmed
partial sum of i.i.d.~St.~Petersburg random variables. In particular, we prove that
although the St.~Petersburg distribution is only
O-subexponential, the subexponential property almost holds.
We also determine the exact tail behavior of the $r$-trimmed limits.

\noindent 
\textit{Keywords:} St.~Petersburg sum; Trimmed sum; Tail asymptotic; Semistable law \\
\noindent \textit{MSC 60F05, MSC 60E07}
\end{abstract}

\section{Introduction} \label{sect:intro}

Peter offers to let Paul toss a fair coin repeatedly
until it lands heads and pays him $2^{k}$ ducats if this
happens on the $k^{\text{th}}$ toss, where $k\in\N = \{1,2,\ldots\}$.
This is the so-called classical St.~Petersburg
game. If $X$ denotes Paul's winning, then
$\p\left\{ X = 2^{k} \right\}  = 2^{-k}$, $k\in\N$.
Put $\lfloor x \rfloor$ for the lower integer part,
$\lceil x \rceil$ for the upper integer part,  and
$\{ x \}$ for the fractional part  of $x$. Then
the distribution function of the gain is
\begin{equation} \label{eq:dist-stp}
F   (x)  = \p\left\{ X \leq x \right\} = \left\{
\begin{array}{ll}
0, & x < 2 \,, \\
1- \frac{1}{2^{\lfloor \log_2 x \rfloor }} =
1- \frac{ 2^{ \{ \log_2 x \} } }{x},
&  x \geq 2 \,,
\end{array}
\right.
\end{equation}
and its quantile function $F^{-1}(s) = Q(s) = \inf \{ x : \, s \leq F(x) \}$ is
\begin{equation} \label{eq:quant}
Q(s) =
\begin{cases}
2, & s = 0, \\
2^{\lceil - \log_2 (1-s) \rceil} = \frac{2^{\{ \log_2 (1-s) \}}}{1-s}, & s \in (0,1).
\end{cases}
\end{equation}

Let $X_1, X_2, \ldots$ be i.i.d.~St.~Petersburg random variables,
and let
\[
S_n = X_1 + \ldots + X_n \quad \text{and} \quad
X_n^* = \max_{1 \leq i \leq n} X_i
\]
denote their partial sum and their maximum, respectively.
To define the $r$-trimmed sum, let $X_{1n} \geq X_{2n} \geq \ldots \geq X_{nn}$
be the ordered sample of the variables $X_1, X_2, \ldots, X_n$. For $r \geq 0$
put
\[
S_{n,r} = \sum_{k=r+1}^n X_{kn},
\]
that is, from the partial sum we subtract the $r$ largest observations. Note that
$S_{n,0} = S_n$ is the St.~Petersburg sum, while $S_{n,1} = S_n - X_n^*$ is the 1-trimmed sum.

In order to state the necessary and sufficient condition for the existence of the limit,
we introduce the positional parameter
\[ 
\gamma_n = \frac{n}{2^{\lceil \log_2 n \rceil}} \in (1/2, 1],
\] 
which shows the position of $n$ between two consecutive powers of 2.
Since the function $2^{ \{ \log_2 x \} }$ in the numerator in (\ref{eq:dist-stp})
is not slowly varying at infinity, 
the St.~Petersburg distribution is not in the domain of attraction of any
stable law or max-stable law, so limit distribution neither for the centered and normed sum,
nor for the centered and normed maximum, holds true.
What holds instead for the sum is the merging theorem
\begin{equation} \label{eq:sum-merge}
\sup_{x \in \mathbb{R}} \left|
\p\left\{ \frac{S_n}{n} - \log_2 n \leq x \right\}  -
G_{\gamma_n}(x) \right| \to 0, \quad \text{as } n \to \infty,
\end{equation}
shown by Cs\"org\H{o} \cite{Cs02},
where $G_\gamma$ is the distribution function of the infinitely divisible random
variable $W_\gamma$, $\gamma \in (1/2, 1]$, with
characteristic function
\[
\E \left( e^{ \i t W_{\gamma}} \right)
= \exp \left( \i t\left[ s_\gamma
+ u_{\gamma} \right] + \int_0^{\infty} \left( e^{\i t x}
- 1 - \frac{\i t x  }{1+x^2} \right) \d R_{\gamma}(x) \right)
\]
with $s_\gamma= - \log_2 \gamma$, 
$u_\gamma = \sum_{k=1}^\infty \frac{\gamma^2}{\gamma^2 + 4^k} -
\sum_{k=0}^\infty \frac{1}{1 + \gamma^2 4^k}$,
and right-hand-side L\'evy function
\begin{equation} \label{eq:Levy-func}
R_{\gamma}(x) = -\,\frac{\gamma}{2^{\lfloor \log_2 (\gamma x) \rfloor}} =
- \frac{2^{\{ \log_2 (\gamma x) \} } }{x}\,, \quad x>0.
\end{equation}
Convergence of $S_n / n - \log_2 n$ along subsequences $\{ n_k = \lfloor \gamma 2^k \rfloor \}$, $\gamma \in (1/2,1]$, was
first shown by Martin-L\"of in 1985 \cite{ML}.

For the maximum we have
\begin{equation} \label{eq:max-pr-merge}
\sup_{j \in \Z}
\left|  \p\left\{ X_n^* = 2^{ \lceil \log_2 n \rceil + j} \right\}
 - p_{j, \gamma_n} \right|
= O(n^{-1}),
\end{equation}
in particular
$\p\left\{ X_n^* = 2^{ \lceil \log_2 n \rceil + j } \right\}
\sim  p_{j,\gamma_n}$, for any
$j \in \Z$, as $n \to \infty$, where
\[ 
p _{j,\gamma} = e^{- \gamma 2^{-j}} \left( 1- e^{- \gamma 2^{-j}}\right), \quad
j \in \Z, \ \gamma \in [1/2, 1].
\] 
See formula (4) by Berkes et al.~in \cite{BCsCs}, or Lemma 1
by Fukker et al.~in \cite{FuGyKe14} in the general case.

The limit theorems (\ref{eq:sum-merge}) and (\ref{eq:max-pr-merge}) suggest
that the irregular oscillating behavior is due to the maximum, which is 
also indicated by the following fact.
It is well-known
(see Chow and Robbins \cite{ChRo61} and Adler \cite{Adler}) that
\[
1 = \liminf_{n \to \infty} \frac{S_n}{n \log_2 n} <
\limsup _{n \to \infty} \frac{S_n}{n \log_2 n}=\infty \quad \text{a.s.,}
\]
while the trimmed sum has nicer behavior, concerning at least the almost
sure limits, since
\[
\lim_{n \to \infty} \frac{S_n - X_n^*}{n \log_2 n} = 1 \quad \text{a.s.}
\]
(cf. Cs\"org\H{o} and Simons \cite{CsSi96}).
For further results and history of St.~Petersburg games see Cs\"org\H{o}
\cite{Cs02} and the references therein.

As a continuation of our studies of the joint behavior of $S_n$ and
$X_n^*$ in \cite{FuGyKe14}, we investigate the properties of the trimmed sum
$S_{n,r}$ both for fix $n$ and for $n \to \infty$.
Figure \ref{fig1} shows the histograms of the St.~Petersburg sum and of the
1-trimmed St.~Petersburg sum.
One can see that the histogram of $\log_2 S_n$ is mixtures of unimodal densities
such that the first lobe is a mixture of overlapping densities,
while the side-lobes have disjoint support.
For the histogram of $\log_2 (S_{n}-X_n^*)$  the side-lobes almost disappear, so
the trimmed version has smaller tail.
According to Proposition 7 in [9], for large $X_n^*$ one gets
$S_n / X_n^* \approx 1$, or equivalently
$(S_n-X_n^*)/X_n^* \approx 0$, which explains the disappearance of side-lobes.

\begin{figure}
\begin{center}
\includegraphics[width=10cm]{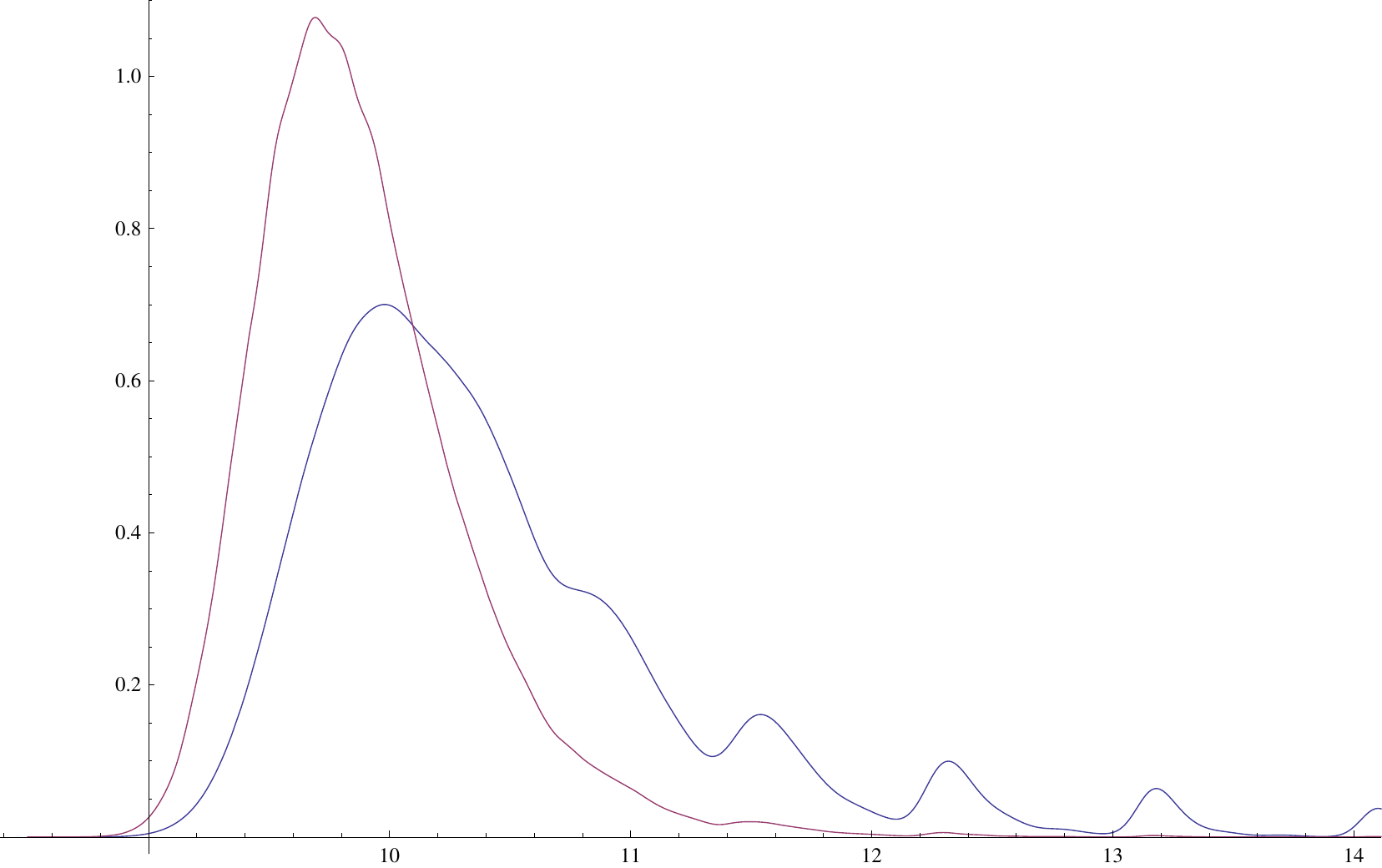}
\caption{The histograms of $\log_2 S_{n}$ and of $\log_2 (S_{n}-X_n^*)$ for
$n=2^{7}$.} \label{fig1}
\end{center}
\end{figure}

In Section \ref{sect:fixn} we investigate asymptotic behavior of the tail of the
distribution function of the sum $S_{n,r}$ for fix $n$. In Theorem \ref{th:Snr-tail}
we determine the exact tail behavior of $\p \{ S_{n,r} > x \}$.
In particular, we show that the St.~Petersburg distribution
is almost subexponential in a well-defined sense.

In Section \ref{sect:ntoinf} we let $n \to \infty$. 
In Theorem \ref{th:trimmed} we determine 
$\{ W_\gamma^* : \gamma \in (1/2, 1] \}$, the set of
the possible subsequential limit distributions of $(S_n - X_n^*)/n - \log_2 n$,
and we obtain  an infinite series representation for the distribution function.
This result was first obtained by Gut and Martin-L\"of in their Theorem 6.1 in \cite{GML}.
They investigate the so-called max-trimmed St.~Petersburg game, where from the sum
$S_n$ all the maximum values are subtracted. Theorem \ref{th:trimlim} states
the  limit theorem for the trimmed sums under arbitrary trimming, while in
Theorem \ref{th:trimtail} we determine the tail behavior of the limit.
In particular, we obtain exact tail asymptotics for a semistable law.
Finally, in Section \ref{sect:general} we mention some of these results
without proof in case of generalized St.~Petersburg games.

\section{Tail behavior of the sum and the trimmed sum} \label{sect:fixn}

In this section the number of summands $n$ is fix, and we are interested
in the tail behavior of $S_{n,r}$.

\subsection{The O-subexponentiality of the St.~Petersburg distribution}

First we summarize some basic facts on subexponential distributions.
Let $G$ be a distribution function of a non-negative random variable $Y$.
Put ${\overline G }(x) = 1 - G(x)$. The distribution $G$ is
\textit{subexponential}, $G \in \mathcal{S}$, if
\begin{equation} \label{eq:subexp-def}
\lim_{x \to \infty} \frac{{\overline {G * G}}(x)}{\overline G (x) } = 2,
\end{equation}
where $*$ stands for the usual convolution, and $G^{n*}$ is the $n^\text{th}$
convolution power, for $n \geq 2$. The characterizing property of the subexponential
distributions is that the sum of i.i.d.~random variables behaves like
the maximum of these variables, that is for any $n \geq 1$
\[
\lim_{x \to \infty} \frac{\p \{ Y_1 + \ldots + Y_n > x \}}
{ \p \{ \max \{ Y_i \, : \, i=1,2, \ldots, n \} > x \}} =1,
\]
or equivalently
\begin{equation} \label{eq:subexp-prop2}
\lim_{x \to \infty} \frac{\p \{ Y_1 + \ldots + Y_n > x \}}{\p \{ Y_1 > x \}} = n.
\end{equation}
For properties of subexponential distributions and their use in practice we
refer to the survey paper by Goldie and Kl\"uppelberg \cite{GoldieKlupp}.

It is well-known that distributions with regularly varying tails are
subexponential. What makes the St.~Petersburg game so interesting is that
its tail is not regularly varying. In fact
it was already noted by Goldie  \cite{Goldie} that the St.~Petersburg
distribution $F$ is not subexponential. What we have instead is that
\begin{equation} \label{eq:sumdf2}
2 = \liminf_{x \to \infty} \frac{{\overline {F  * F }}(x)}{\overline F  (x) }
< \limsup_{x \to \infty} \frac{{\overline {F  * F }}(x)}{\overline F  (x) } = 4.
\end{equation}
This can be proved by showing that for $1 \leq k \leq \ell$
\[
\p \{ X_1 + X_2 > 2^k + 2^{\ell} \} =
\begin{cases}
2 \cdot 2^{-\ell} + 2 \cdot 2^{-(\ell + k)} - 4 \cdot 2^{-2\ell}, & \text{for }
\ell > k, \\
2 \cdot 2^{-\ell} - 2^{- 2 \ell}, & \text{for } \ell = k,
\end{cases}
\]
from which
\[
\lim_{\ell \to \infty}
\frac{\p \{ X_1 + X_2 > 2^\ell \} }{\p \{ X_1 > 2^\ell \}} = 4,
\quad \text{ and } \
\lim_{\ell \to \infty}
\frac{\p \{ X_1 + X_2  > 2^\ell -1 \} }{\p \{ X_1 > 2^\ell -1 \}} = 2.
\]
Moreover, it is simple to see that 4 is in fact the limsup.

This naturally leads to the extension of subexponentiality. A distribution $G$
is \textit{O-subexponential}, $G \in \mathcal{OS}$, if
\[
l^*(G) := \limsup_{x \to \infty}  \frac{{\overline {G * G}}(x)}{\overline G (x) }
< \infty.
\]
It is known that the corresponding $\liminf$ is always greater than, or equal to $2$,
and it was shown recently by Foss and Korshunov \cite{FK} that it is exactly 2 for any
heavy-tailed distribution. The notion of O-subexponentiality was introduced
by Kl\"uppelberg \cite{K}. The properties of the $\mathcal{OS}$ class, in
particular when the distribution is also infinitely divisible,
were investigated by Shimura and Watanabe \cite{SW}. In their Proposition 2.4
they prove that if $G \in \mathcal{OS}$ then for every $\varepsilon > 0$ there
is a $c > 0$ such that for all $n$ and $x \geq 0$
\[
\frac{\overline {G^{n*}} (x)}{\overline G(x) } \leq c (l^*(G) - 1 + \varepsilon)^n.
\]
In the St.~Petersburg case $l^*(F )=4$. In Theorem \ref{th:Snr-tail} we determine the exact
asymptotic behavior of $\overline {F^{n*}}(x)$, which, in particular, implies a linear
bound in $n$ instead of the exponential.

Let us examine the case $n=2$ in detail. Note that
$\p \{ X_1 > 2^{\ell} \} = \p \{ X_1 > 2^{\ell} + 2^{k} \}$ for $k < \ell$, therefore
from (\ref{eq:sumdf2})
\[
\frac{ \p \{ X_1 + X_2 > 2^{\ell} + 2^{k} \}} { \p \{ X_1 > 2^{\ell} + 2^{k} \} } =
2 + 2 \cdot 2^{-k} - 4 \cdot 2^{-\ell}.
\]
From this it is clear that when both $\ell$ and $k$ tends to infinity, then the
limit exists and equal to 2; in particular for any $\delta > 0$
\[
\lim_{x \to \infty, \{ \log_2 x \} \geq \delta }
\frac{ \p \{ X_1 + X_2 > x \}} { \p \{ X_1 > x \} } = 2,
\]
where $\{ x \}$ stands for the fractional part of $x$.
That is, the St.~Petersburg distribution is `almost subexponential'.
We prove the corresponding result for general $n$, i.e.~for any $\delta > 0$
\[
\lim_{x \to \infty, \{ \log_2 x \} \geq \delta}
\frac{ \p \{ S_n > x \} } {\p \{ X_1 > x \} } = n.
\]

\smallskip

\begin{theorem} \label{th:Snr-tail}
For any $0 \leq r <  n $ we have as $x \to \infty$
\begin{equation} \label{eq:exacttail}
\begin{split}
\p \{ S_{n,r} > x \} & \sim 
\frac{2^{(r+1) \{\log_2 x \}}}{x^{r+1}} \binom{n}{r+1} \\
& \phantom{\sim} \, \times \left( 1 + \p \left\{ S_{n-r-1} >  x (1 - 2^{-\{ \log_2 x \}}) \right\} (2^{r+1} - 1)  \right).
\end{split}
\end{equation}
In particular, for any $0 < \delta < 1$,
\begin{equation} \label{eq:tail-as}
\lim_{x \to \infty, \{ \log_2 x \} > \delta }
\p \{ S_{n,r} > x \} \frac{x^{r+1}}{2^{(r+1) \{\log_2 x \}}} = \binom{n}{r+1}.
\end{equation} 
\end{theorem}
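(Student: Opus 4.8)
The plan is to establish (\ref{eq:exacttail}); the limit relation (\ref{eq:tail-as}) then follows at once, because for $\{\log_2 x\}>\delta$ the argument $v:=x(1-2^{-\{\log_2 x\}})$ of $\p\{S_{n-r-1}>\,\cdot\,\}$ satisfies $v\ge x(1-2^{-\delta})\to\infty$, so $\p\{S_{n-r-1}>v\}\to 0$, while the bracket in (\ref{eq:exacttail}) always lies in $[1,2^{r+1}]$. One disposes separately of $r=n-1$, where $S_{n,n-1}=\min_i X_i$ and hence $\p\{S_{n,n-1}>x\}=\overline F(x)^n$, which is the right side of (\ref{eq:exacttail}) since $\p\{S_0>v\}=0$; so assume $n-r-1\ge 1$. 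Put $u=2^{\lfloor\log_2 x\rfloor}$, so that $u\le x<2u$, $\overline F(x)=\p\{X=u\}=1/u$, $\p\{X\ge u\}=2/u$, and $v=x-u$. Classify each $X_i$ as \emph{huge} if $X_i>x$ (equivalently $X_i\ge 2u$), \emph{medium} if $X_i=u$, and \emph{small} if $X_i\le u/2$, and let $M$, $L$, $T=M+L$ count the huge, medium, and $(\ge u)$-variables, respectively.

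The heart of the argument is to split $\{S_{n,r}>x\}$ according to the pair $(M,T)$ and to keep only the contributions of order $u^{-(r+1)}$. If $M\ge r+1$, then after deleting the $r$ largest a huge variable survives and $S_{n,r}>x$ automatically; this event lies inside $\{S_{n,r}>x\}$ and has probability $\binom n{r+1}u^{-(r+1)}(1+O(u^{-1}))$. The new dominant piece comes from $M\le r$ and $T=r+1$: the $r$ largest are then precisely the $M$ huge variables and $r-M$ of the $r+1-M$ medium ones, leaving exactly one copy of $u$ together with the $n-r-1$ small variables, so that on this event $S_{n,r}=u+\sum_{\text{small}}X_i$ and hence $\{S_{n,r}>x\}=\{\sum_{\text{small}}X_i>v\}$. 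Choosing which variables are huge and which are medium produces a factor $\binom nM\binom{n-M}{r+1-M}u^{-(r+1)}$ times $\p\{S_{n-r-1}>v,\ \max_{i\le n-r-1}X_i<u\}$, and since $\p\{\max_{i\le n-r-1}X_i\ge u\}\le 2(n-r-1)/u$, this probability equals $\p\{S_{n-r-1}>v\}+O(u^{-1})$. Summing over $M=0,\dots,r$ and using the identities $\binom nM\binom{n-M}{r+1-M}=\binom n{r+1}\binom{r+1}M$ and $\sum_{M=0}^{r}\binom{r+1}M=2^{r+1}-1$ turns this piece into $\binom n{r+1}(2^{r+1}-1)u^{-(r+1)}\p\{S_{n-r-1}>v\}+O(u^{-(r+2)})$.

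It remains to check that everything else is $O(u^{-(r+2)})=o(u^{-(r+1)})$. The cases $M\ge r+2$, and $M\le r$ with $T\ge r+2$ (where two $(\ge u)$-variables survive the trimming and $S_{n,r}\ge 2u>x$ automatically), both have probability $O(u^{-(r+2)})$ directly. The delicate case — the step I expect to be the main obstacle — is $T\le r$: here $X_{(r+1)n}<u$, hence $X_{(r+1)n}\le u/2$ since order statistics are powers of $2$, so on $\{S_{n,r}>x\}$ one has $\sum_{k=r+2}^{n}X_{kn}>x-u/2\ge u/2$; as this sum has $n-r-1$ terms each at most $X_{(r+2)n}$, we get $X_{(r+2)n}>u/(2(n-r-1))$, and therefore at least $r+2$ of the $X_i$ exceed $u/(2(n-r-1))$. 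Using $\overline F(t)<2/t$, this case has probability at most $\binom n{r+2}\bigl(4(n-r-1)/u\bigr)^{r+2}=O(u^{-(r+2)})$. Making this ``spread-out'' estimate precise, and checking that all the bounds above are uniform in $x$, is the part that needs the most care.

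Adding the $M\ge r+1$ term to the dominant $T=r+1$ piece and absorbing all the $O(u^{-(r+2)})$ errors — negligible since the main term is $\ge\binom n{r+1}u^{-(r+1)}$ — yields
\[
\p\{S_{n,r}>x\}\ \sim\ \binom n{r+1}u^{-(r+1)}\bigl(1+(2^{r+1}-1)\,\p\{S_{n-r-1}>v\}\bigr).
\]
Substituting $u^{-(r+1)}=2^{(r+1)\{\log_2 x\}}/x^{r+1}$ and $v=x(1-2^{-\{\log_2 x\}})$ gives (\ref{eq:exacttail}), and (\ref{eq:tail-as}) follows as explained above.
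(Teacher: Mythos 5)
Your proof is correct, and it rests on the same basic strategy as the paper's --- decomposing the event according to the size of the $(r+1)$-th largest observation relative to $u=2^{\lfloor\log_2 x\rfloor}$ --- but the execution differs enough to be worth comparing. The paper works with the uniform quantile representation $X_{kn}\stackrel{\mathcal{D}}{=}\Psi(U_{kn})/U_{kn}$, reads the two main terms off the Beta density of $U_{r+1,n}$, and identifies the factor multiplying $\p\{S_{n-r-1}>v\}$ via the weak convergence of $(U_{r+2,n},\dots,U_{nn})$, conditioned on $U_{r+1,n}\to0$, to an independent uniform sample of size $n-r-1$. Your huge/medium/small counting replaces both steps by exact finite computations with the discrete law (the binomial tail for $M\ge r+1$, and the identity $\binom{n}{M}\binom{n-M}{r+1-M}=\binom{n}{r+1}\binom{r+1}{M}$ producing $2^{r+1}-1$), which is more elementary and makes the $O(u^{-(r+2)})$ error explicit. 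The most substantive difference is the remainder term: the paper bounds its $I_1$ (your $T\le r$ case) by $\lfloor\log_2 x\rfloor\,\p\{S_{n,r+1}>x/2\}$, which silently presupposes an a priori bound $\p\{S_{n,r+1}>y\}=O(y^{-(r+2)})$ --- i.e., the statement one level deeper in $r$, or a downward induction from $r=n-1$ --- whereas your ``spread-out'' argument ($X_{(r+2)n}>u/(2(n-r-1))$ on that event, hence at least $r+2$ variables exceed a threshold of order $u$) is self-contained and yields the clean $O(u^{-(r+2)})$ directly. The step you flagged as delicate is in fact complete as written, and since all constants depend only on $n$ and $r$, uniformity in $x$ is automatic.
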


\begin{proof}
Let $X_{1n} \geq X_{2n} \geq \ldots \geq X_{nn}$ be the ordered sample of the
variables $X_1, X_2, \ldots, X_n$.
Using the well-known quantile representation, the form of the quantile function (\ref{eq:quant}),
and that $U \stackrel{\mathcal{D}}{=} 1 - U$, for $U \sim  \mathrm{Uniform}(0,1)$,
we obtain
\begin{equation} \label{eq:quantile-uni}
\left( X_{1n}, \ldots, X_{nn} \right)
\stackrel{\mathcal{D}}{=}
\left( 2^{\lceil \log_2 U_{1n}^{-1}\rceil}, \ldots,  2^{\lceil \log_2 U_{nn}^{-1}\rceil} \right),
\end{equation}
where $U_{1n} \leq U_{2n} \leq \ldots \leq U_{nn}$ is the ordered sample of $n$ independent
Uniform$(0,1)$ random variables.
Introducing the function $\Psi(x) = 2^{\{ \log_2 x \}}$, i.e.~it grows linearly from 1 to 2 on each
interval $[2^j, 2^{j+1})$, $j = 1,2, \ldots$, we have
\[
\left( X_{1n}, \ldots, X_{nn} \right)
\stackrel{\mathcal{D}}{=}
\left( \frac{\Psi(U_{1n})}{U_{1n}}, \ldots,  \frac{\Psi(U_{nn})}{U_{nn}} \right).
\]

In the following we frequently use the simple facts $\Psi(u) / u = 2^{- \lfloor \log_2 u \rfloor}$ and
$2^{-\lfloor \log_2 u \rfloor} > x$ if and only if $u < 2^{-\lfloor \log_2 x \rfloor}$.
The density function of $U_{r+1,n}$ is $\binom{n}{r+1} (r+1) u^r (1-u)^{n-r-1}$, therefore
\begin{equation} \label{eq:I3}
\p \left\{ \frac{\Psi(U_{r+1,n})}{U_{r+1,n}} > x \right\} = \p \{ U_{r+1,n} < 2^{-\lfloor \log_2 x \rfloor} \}
\sim \binom{n}{r+1}  \frac{2^{\{ \log_2 x \} (r+1) }}{x^{r+1}}.
\end{equation}
Considering the asymptotics, write
\[
\begin{split}
\p \{ S_{n,r} > x \} 
& = \sum_{m=1}^\infty \p \left\{ S_{n,r+1} > x - 2^m, \, \frac{\Psi(U_{r+1,n})}{U_{r+1,n}} = 2^m  \right\}  \\
& = \sum_{m=1}^{\lfloor \log_2 x \rfloor -1}
\p \left\{ S_{n,r+1} > x - 2^m, \, \frac{\Psi(U_{r+1,n})}{U_{r+1,n}} = 2^m  \right\} \\
& \phantom{=} \, + \p \left\{ S_{n,r+1} > x - 2^{\lfloor \log_2 x \rfloor},
\, \frac{\Psi(U_{r+1,n})}{U_{r+1,n}} = 2^{\lfloor \log_2 x \rfloor}  \right\} \\
& \phantom{=} \, + \p \left\{ \frac{\Psi(U_{r+1,n})}{U_{r+1,n}} > x \right\} \\
& =: I_1 + I_2 + I_3.
\end{split}
\]
For $m \leq \lfloor \log_2 x \rfloor -1$ we have $x - 2^m \geq x / 2$, thus, by (\ref{eq:I3}) the first sum
\[
I_1 \leq \lfloor \log_2 x \rfloor \p \{ S_{n,r+1} > x/2  \} = O(x^{-(r+2)} \ln x ).
\]
For $I_2$ we have
\begin{equation} \label{eq:I2}
\begin{split}
I_2 & = \p \left\{  \sum_{i=r+2}^n \frac{\Psi(U_{in})}{U_{in}} > x (1 - 2^{-\{ \log_2 x \}} ) 
\Big|  \frac{\Psi(U_{r+1,n})}{U_{r+1,n}} = 2^{\lfloor \log_2 x \rfloor}  \right\} \\
& \phantom{=} \, \times \p \left\{ \frac{\Psi(U_{r+1,n})}{U_{r+1,n}} = 2^{\lfloor \log_2 x \rfloor}  \right\} \\
& \sim \p \left\{ S_{n-r-1} >  x (1 - 2^{-\{ \log_2 x \}}) \right\} 
\binom{n}{r+1} (2^{r+1} - 1) \frac{2^{(r+1) \{ \log_2 x \}}}{x^{r+1}}.
\end{split}
\end{equation}
Here we used the simple fact that conditioning on
$U_{n,r+1} \to 0$
\[
(U_{n,r+2}, \ldots, U_{nn}) \stackrel{\mathcal{D}}{\longrightarrow}
(U_{1,n-r-1}, \ldots, U_{n-r-1, n-r-1} ).
\]
Combining (\ref{eq:I2}) and (\ref{eq:I3}) formula (\ref{eq:exacttail}) follows. To show (\ref{eq:tail-as})
notice that if $\{ \log_2 x \} > \delta$ and $x \to \infty$, then 
$x (1 - 2^{-\{ \log_2 x \}}) \to \infty$, which implies the convergence
$\p \left\{ S_{n-r-1} >  x (1 - 2^{-\{ \log_2 x \}}) \right\} \to 0$.
\end{proof}

When $r = 0$ the result describes the tail behavior of the untrimmed sum $S_n$.
In Figure \ref{fig:df} the oscillatory behavior of $\p \{ S_n > x \}$
is clearly visible. We also see that at each power of 2 there is a large jump, that
is where the asymptotic (\ref{eq:tail-as}) fails.
\begin{figure}[!ht]
\begin{center}
\includegraphics[width=10cm]{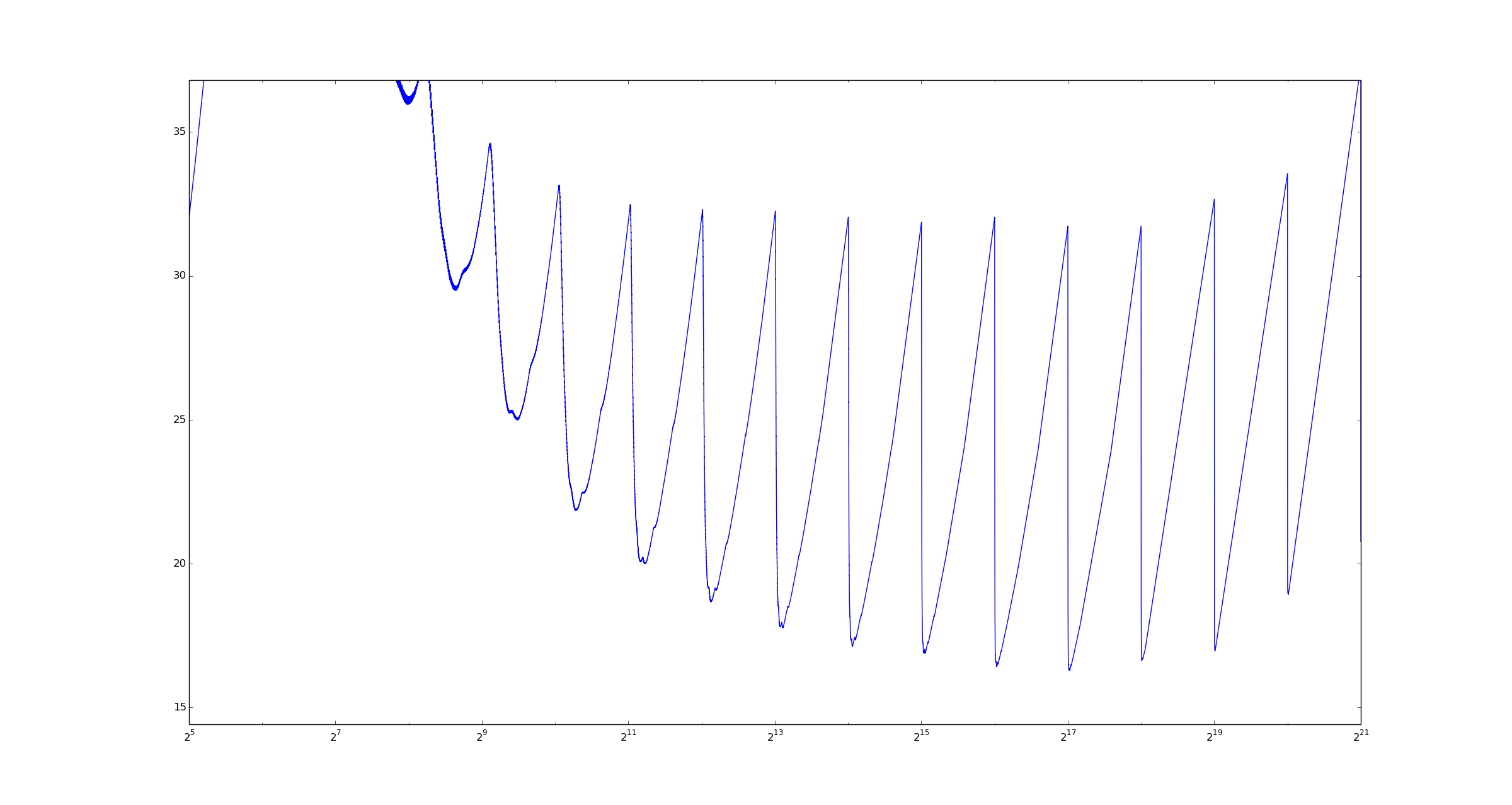}
\caption{The function $x \cdot \p \{ S_{16} > x \}$ in a
logarithmic scale.} \label{fig:df}
\end{center}
\end{figure}

We mention some important consequences.

In the untrimmed case Theorem \ref{th:Snr-tail} gives
\[
\p \{ S_{n} > x \} \sim \frac{2^{\{\log_2 x \}}}{x} n 
\left( 1 + \p \left\{ S_{n-1} >  x (1 - 2^{-\{ \log_2 x \}}) \right\}  \right),
\]
which readily implies that for any $n \geq 1$ we have
\[ 
n  = \liminf_{x \to \infty} x  \p \left\{ S_n >  x \right\} <
\limsup_{x \to \infty} x  \p \left\{ S_n > x \right\} = 2 n.
\] 

Since $x \p \{ X > x \} = 2^{\{ \log_2 x \} }$, $x \geq 2$, we have
\[
\lim_{x \to \infty, \{ \log_2 x \} \geq \delta}
\frac{ \p \{ S_n > x \} } {\p \{ X > x \} } = n.
\]
This convergence also shows that (\ref{eq:tail-as}) does not hold without
the restriction, since by (\ref{eq:subexp-prop2}) that would imply the
subexponentiality of $F $.

For $c> 1$ fixed as $m \to \infty$
\[
1 - 2^{ -\{ \log_2 ( 2^m + c  ) \}  }
\sim  1 - e^{- c 2^{-m} }
\sim  c 2^{-m}.
\]
Therefore from (\ref{eq:exacttail}) we obtain that for any $c > 1$
\begin{equation} \label{eq:finer-as}
\p \{ S_{n,r} > 2^m + c \} \sim 2^{-m (r+1)} \binom{n}{r+1}
\left(  1 + \left( 2^{r+1} - 1 \right) \p \{ S_{n-r-1} > c \} \right),
\end{equation}
as $m \to \infty$. For the maximum term we have
$\p \{ X_n^* > x \} \sim n \p \{ X > x \}$, and so
(\ref{eq:finer-as}) with $r=0$ gives
\[
\lim_{m \to \infty}
\frac{\p \left\{ S_n >  2^m + c  \right\}}{\p \{ X_n^* > 2^m + c \} }
= 1 +  \p \left\{ S_{n-1} >  c   \right\}.
\]
If $c = c(m)$ tends to infinity arbitrarily slowly, then the limit above
is 1, that is the St.~Petersburg distribution is very close to having the
subexponential property.

\section{Properties of the limit} \label{sect:ntoinf}

\subsection{Properties of the 1-trimmed limit}

In the following we determine the possible limit distributions of the 
1-trimmed sum, and we investigate the limit.

First we introduce some notation. Given that $X \leq 2^{k}$ for $i \leq k$ we have
$\p\left\{ X = 2^{i} | X \leq 2^{k} \right\} =
2^{-i} / (1 - 2^{-k})$. Introduce the
corresponding distribution function
\[
\begin{split}
F_k  (x) & = \p\left\{ X \leq x | X \leq 2^{k} \right\} 
=
\begin{cases}
\frac{1}{1- 2^{-k}} \left[ 1 - \frac{ 2^{\{ \log_2 x\}}}{x} \right],
& \text{for } x \in [2, 2^{k}], \\
1, & \text{for } x \geq 2^{k}.
\end{cases}
\end{split}
\]
In the following $X^{(k)}, X_1^{(k)}, X_2^{(k)}, \ldots,$ are i.i.d.~random variables with
distribution function $F_k $, and $S_n^{(k)}$ stands for their partial sums.
For the moments we have (see (29) in \cite{FuGyKe14})
\begin{equation} \label{eq:trunc-moment}
\begin{split}
\E (X^{(k)})^{\ell} & =
\frac{1}{1 - 2^{-k}} \sum_{i=1}^k 2^{i \ell} 2^{-i}
=
\begin{cases}
\frac{2^{\ell-1}}{1- 2^{-k}}
 \frac{2^{\left( \ell - 1 \right) k} - 1}{2^{\ell-1} - 1},&
\text{for } \ell \geq 2, \\
\frac{k }{1- 2^{-k}} , & \text{for } \ell = 1.
\end{cases}
\end{split}
\end{equation}

Introduce the infinitely divisible random variables
$W_{j,\gamma}$, $j \in \Z, \gamma \in [1/2,1]$, with characteristic function
\begin{equation} \label{eq:wjg-def}
\varphi_{j,\gamma} (t) = \E e^{\i t W_{j, \gamma} } =
\exp \bigg[ \i t u_{j, \gamma}  +
\int_0^\infty \left( e^{\i t x} - 1 - \i t x \right) \d L_{j, \gamma} (x) \bigg],
\end{equation}
with
\[ 
L _{j, \gamma}(x) =
\begin{cases}
\gamma 2^{-j} - \frac{2^{ \{ \log_2 (\gamma x) \} }}{x}, & \text{for }
x < 2^{j} \gamma^{-1}, \\
0, & \text{for } x \geq 2^{j} \gamma^{-1},
\end{cases}
\] 
and $u_{j,\gamma}  = j - \log_2 \gamma$.
According to Corollary 2 in \cite{FuGyKe14}
$W_{j,\gamma}$'s are the possible subsequential limits of
$S_n^{(\lceil \log_2 n \rceil + j )}$, more precisely
\[ 
\sup_{x \in \R}\left| \p \left\{ \frac{S_n^{(\lceil \log_2 n \rceil + j )}}{n }
- \log_2 n \leq x  \right\}
-  G_{j, \gamma_n}  (x)  \right| \to 0.
\] 
We show an exponential tail bound for the sums conditioned on the maxima.

\begin{lemma} \label{lemma:sntail}
For $x \ge 0$, put
\[
h(x)=(2+x)\ln \left(1+ \frac{x}{2} \right)- x.
\]
For any $n \geq 1$,  $j \geq 1 - \lceil \log_2 n \rceil$ and $x\ge 0$, we have that
\[
\p\left\{ S_n^{( \lceil \log_2 n \rceil + j )} -
\E S_n^{( \lceil \log_2 n \rceil + j )} > n x \right\}
\leq e^{-\frac{h(x)}{\eta_{j, \gamma_n}}},
\]
where 
\begin{equation} \label{eq:eta-def}
\eta_{j,\gamma}=2^j \gamma^{-1}.
\end{equation}
\end{lemma}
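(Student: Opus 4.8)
The plan is to run a Chernoff (Bennett-type) bound that exploits two special features of the truncated variable $X^{(k)}$, where I write $k=\lceil\log_2 n\rceil+j$ throughout (the hypothesis $j\ge 1-\lceil\log_2 n\rceil$ just ensures $k\ge 1$, so $X^{(k)}$ is supported on $\{2,4,\dots,2^k\}$). First I would isolate the three facts the argument rests on: (a) $0\le X^{(k)}\le 2^k$ almost surely; (b) by (\ref{eq:trunc-moment}) with $\ell=2$,
\[
\E\bigl(X^{(k)}\bigr)^2=\frac{2}{1-2^{-k}}\cdot\frac{2^k-1}{2-1}=2^{k+1},
\qquad\text{so}\qquad
\Var\bigl(X^{(k)}\bigr)\le 2^{k+1};
\]
and (c) the identity $2^k=n\,\eta_{j,\gamma_n}$, immediate from $\gamma_n=n2^{-\lceil\log_2 n\rceil}$ and (\ref{eq:eta-def}), which is exactly what forces the correct constant at the end.

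Next I would bound the moment generating function of the centered summand. Since $t\mapsto(e^t-1-t)/t^2$ is nondecreasing on $[0,\infty)$, for $0\le y\le 2^k$ and $\lambda>0$ one has $e^{\lambda y}\le 1+\lambda y+(e^{\lambda 2^k}-1-\lambda 2^k)\,y^2/2^{2k}$; taking expectations, using (a)--(b) and $1+a\le e^a$, gives
\[
\E\exp\!\bigl(\lambda(X^{(k)}-\E X^{(k)})\bigr)\le\exp\!\Bigl(\tfrac{2}{2^k}\bigl(e^{\lambda 2^k}-1-\lambda 2^k\bigr)\Bigr),
\]
and hence, by independence, $\E\exp(\lambda(S_n^{(k)}-\E S_n^{(k)}))\le\exp\bigl(\tfrac{2n}{2^k}(e^{\lambda 2^k}-1-\lambda 2^k)\bigr)$. (Equivalently one could simply quote Bennett's inequality for bounded independent summands with range $2^k$ and variance proxy $n2^{k+1}$.)

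Finally I would carry out the Chernoff optimization. Markov's inequality yields, with $u:=\lambda 2^k>0$,
\[
\p\bigl\{S_n^{(k)}-\E S_n^{(k)}>nx\bigr\}\le\exp\!\Bigl(\tfrac{n}{2^k}\bigl[\,2(e^u-1-u)-xu\,\bigr]\Bigr),
\]
and minimizing $2(e^u-1-u)-xu$ over $u>0$ gives the minimizer $u=\ln(1+x/2)\ge 0$ and minimal value $x-(2+x)\ln(1+x/2)=-h(x)$. Substituting and using (c) turns the exponent into $-h(x)/\eta_{j,\gamma_n}$, which is the assertion; the case $x=0$ is trivial since $h(0)=0$. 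I do not expect a genuine obstacle here: the probabilistic content is just the standard bounded-summand MGF estimate, and the only thing to watch is the bookkeeping — one must check that the range $2^k$, the variance bound $2^{k+1}$, and the identity $2^k=n\,\eta_{j,\gamma_n}$ combine to leave exactly $\eta_{j,\gamma_n}$ in the denominator rather than a constant multiple of it.
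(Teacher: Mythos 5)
Your proof is correct and follows essentially the same route as the paper: a Chernoff bound whose exponent $\tfrac{2n}{2^k}\bigl(e^{\lambda 2^k}-1-\lambda 2^k\bigr)-\lambda n x$ is identical to the paper's and is optimized at the same point $\lambda 2^k=\ln(1+x/2)$, yielding $-h(x)/\eta_{j,\gamma_n}$ via $2^k=n\,\eta_{j,\gamma_n}$. The only (harmless) difference is in how the moment generating function is bounded: the paper sums the term-by-term moment bounds $\E\bigl(X^{(k)}\bigr)^{\ell}\le 2\,(2^k)^{\ell-1}$ from (\ref{eq:trunc-moment}) in the Taylor expansion, whereas you reach the same estimate from only the range $2^k$ and the second moment $2^{k+1}$ via the standard Bennett monotonicity argument.
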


\begin{proof}
For any $\lambda>0$, we apply the Chernoff bounding
technique:
\[
\begin{split}
& \p\left\{
n^{-1} \left( S_{n}^{(\lceil \log_2 n \rceil + j)} - \E S_{n}^{(\lceil \log_2 n \rceil + j)} \right)
> x  \right\} \\
&=\p\left\{
n^{-1}  S_{n}^{(\lceil \log_2 n \rceil + j)}
> x  +\E X^{(\lceil \log_2 n \rceil + j)}\right\}\\
&\leq  {e^{-\lambda \left(x+\E X^{(\lceil \log_2 n \rceil + j)}\right)}} \E \exp \left[ \lambda n^{-1}
S_{n}^{(\lceil \log_2 n \rceil + j)}
\right]   \\
& = e^{-\lambda \left(x+\E X^{(\lceil \log_2 n \rceil + j)}\right)} \left( \E  \exp \left[ \frac{\lambda}{n}
 X^{(\lceil \log_2 n \rceil + j)} \right]
\right)^{n}.
\end{split}
\]
One has that
\begin{align*}
\E  \exp \left[ \frac{\lambda}{n}
 X^{(\lceil \log_2 n \rceil + j)}   \right]
& = 1+\frac{\lambda}{n}\E X^{(\lceil \log_2 n \rceil + j)} + \sum_{\ell=2}^{\infty}
\frac{ \lambda^{\ell}\E \left\{(X^{({\lceil \log_2 n \rceil + j})})^{\ell} \right\}}{n^\ell \, \ell !}\\
& \leq  1+ \frac{\lambda}{n}\E X^{(\lceil \log_2 n \rceil + j)} + 2 \sum_{\ell=2}^{\infty}
\frac{ \lambda^{\ell}\left( \frac{2^{j}}{\gamma_n}\right)^{\ell-1} }{n\ell !}\\
& =  1+ \frac{\lambda}{n}\E X^{(\lceil \log_2 n \rceil + j)} + \frac 2n
\frac{ e^{\lambda\eta_{j, \gamma_n}}-1-\lambda\eta_{j, \gamma_n}}{\eta_{j, \gamma_n}}\\
& \le \exp \left[ \frac{\lambda}{n}\E X^{(\lceil \log_2 n \rceil + j)} + \frac 2n
\frac{ e^{\lambda\eta_{j, \gamma_n}}-1-\lambda\eta_{j, \gamma_n}}{\eta_{j, \gamma_n}}\right],
\end{align*}
where we used that by (\ref{eq:trunc-moment})
\[
\E  \left( X^{( \lceil \log_2 n \rceil + j )} \right)^\ell  =
\frac{1}{1- 2^{-(\lceil \log_2 n \rceil + j)}}
\frac{2^{\ell - 1}}{2^{\ell -1 } - 1}
\left[ \left( \frac{n 2^j}{\gamma_n} \right)^{\hspace{-3pt}  \ell -1} \hspace{-5pt}  - 1 \right]
\le 2\left( \frac{n 2^j}{\gamma_n} \right)^{\hspace{-3pt} \ell -1} \hspace{-4pt},
\]
$\ell \geq 2$. Therefore
\begin{align*}
\p\left\{
\frac{S_{n}^{(\lceil \log_2 n \rceil + j)} - \E S_{n}^{(\lceil \log_2 n \rceil + j)}}{n}
>  x  \right\}
& \le   \exp \left[ 2
\frac{ e^{\lambda\eta_{j, \gamma_n}}-1-\lambda\eta_{j, \gamma_n}}{\eta_{j, \gamma_n}} - \lambda x \right].
\end{align*}
With the choice
$\lambda= \left[ \ln \left( 1 + \frac{x}{2} \right) \right] / \eta_{j, \gamma_n}$
the lemma is proved.
\end{proof}

\begin{remark}
Note that
$h(x) \sim x \ln x$, as $x \to \infty$, therefore the upper bound for large
$x$ is approximately $\exp \left[-\gamma 2^{-j} x \ln x  \right]$.

Applying the elementary inequalities
\[
\frac{u}{1+u/2}\le \ln (1+u) \le u, \quad u \geq 0,
\]
one has that
\[
\frac{x^2}{4+x}\le h(x) \le \frac{x^2}{2},
\]
and so for any $x \geq 0$
\[
e^{-\frac{x^2}{2\eta_{j, \gamma_n}}} \le
e^{-\frac{h(x)}{\eta_{j, \gamma_n}}} \leq
e^{-\frac{x^2}{(4+x)\eta_{j, \gamma_n}}}.
\]
Since $h(x) = x^2/4 + o(x^2)$ as $x \to 0$, for small $x\ge 0$,
we have 
$e^{-\frac{h(x)}{\eta_{j, \gamma_n}}}
\approx e^{-\frac{x^2}{4\eta_{j, \gamma_n}}}$.
\end{remark}

\begin{remark}
We note that this exponential inequality (and its straightforward extension to
generalized St.~Petersburg games) allows us to show that
arbitrary powers of the random variables
$(S_n^{(k_n)} - \E S_n^{(k_n)})/ \Var S_n^{(k_n)}$ are uniformly integrable, whenever
$\log_2 n - k_n \to \infty$. The latter implies that in Propositions 2 and 3 in \cite{FuGyKe14}
not only distributional convergence, but also moment convergence holds.
\end{remark}

Going back to (\ref{eq:wjg-def}) note that each $W_{j,\gamma}$ has finite exponential moment of any order.
We pointed out in \cite{FuGyKe14} that the distribution function
$G_{j,\gamma}(x) = \p \{ W_{j, \gamma} \leq x \}$ is infinitely
many times differentiable.
Since the support of the  L\'evy measure is bounded, according to 
Theorem 26.1 in \cite{Sato}
for the tail behavior of $W_{j,\gamma}$ we have the following.
For any $0 < c < \gamma/2^j$ 
\[
\E \exp \left\{ c W_{j,\gamma} |\ln W_{j,\gamma}| \right\} < \infty,
\]
and so
\[
\p \left\{ |W_{j,\gamma} | > x \right\} = o( \exp\{ - c x \ln x \} ),
\quad \text{as } \, x \to \infty,
\]
while for $c > \gamma/2^j$
\[
\E \exp \left\{ c W_{j,\gamma} |\ln W_{j,\gamma}| \right\} = \infty,
\]
and
\[
\p \left\{ |W_{j,\gamma} | > x \right\} \exp\{ c x \ln x \}  \to \infty,
\quad \text{as } \, x \to \infty.
\]
This result combining with Proposition 5 in \cite{FuGyKe14} implies that
the tail bound in Lemma \ref{lemma:sntail} is optimal.

Expanding the exponential in Taylor-series and changing the
order of the summation we obtain
\begin{equation} \label{eq:f}
\log \varphi_{j,\gamma}(t) = \i t \log_2 \eta_{j, \gamma} +
\sum_{k=2}^\infty \frac{( \i t )^k}{k!} \eta_{j, \gamma}^{k-1}
\frac{2^{k-1}}{2^{k-1} - 1}=: \i t \log_2 \eta_{j, \gamma} +
f_{\eta_{j,\gamma}}(t),
\end{equation}
with $\eta_{j,\gamma} = 2^j / \gamma$ as in (\ref{eq:eta-def}).
The distribution of $W_{j,\gamma}$
depends only on the single parameter $\eta_{j,\gamma} = 2^j/\gamma$. Denote
$Z_{\eta}$ a random variable with the characteristic function $e^{f_\eta (t)}$. Then,
by the definition of $f_\eta$
\[
\E e^{\i t \frac{Z_\eta}{\eta}} = e^{f_\eta(t/\eta)} = e^{f_1(t)/\eta},
\]
thus from the properties of $Z_1$ we can derive the properties of $Z_\eta$, for any $\eta$.
For example, for the density function $g_\eta$ of $Z_\eta$ we have
\[
\begin{split}
g_\eta(x)& =\frac{1}{2\pi}\int_{-\infty}^{\infty}e^{f_{\eta} (t) }e^{-\i tx} \d t
=\frac{1}{2\pi}\int_{-\infty}^{\infty}e^{f_{1} (t\eta)/\eta }e^{-\i tx} \d t \\
& =\frac{1}{2\pi}\frac{1}{\eta}\int_{-\infty}^{\infty}e^{(f_{1} (t)-\i tx)/\eta} \d t.
\end{split}
\]
Thus, $g_\eta$ can be derived from the characteristic function $e^{f_{1}}$ of $Z_{1}$
by a simple transformation. It also shows, that the proper scaling is the variance
instead of the standard deviation.

From (\ref{eq:f}) it is apparent that
\[
\frac{W_{j,\gamma} -  \log_2 \eta_{j,\gamma}}{ \sqrt{2 \eta_{j,\gamma}}}
\stackrel{\mathcal{D}}{\longrightarrow} \mathrm{N}(0,1),
\quad \text{as } \eta_{j,\gamma} \to 0,
\]
while
\[
\frac{W_{j,\gamma}} {  \eta_{j,\gamma}}
\stackrel{\p}{\longrightarrow} 0,
\quad \text{as } \eta_{j,\gamma} \to \infty.
\]
These limit theorems are in complete accordance with
Proposition 3 in \cite{FuGyKe14}, which states that conditioning on small
maximum the limit is normal, and with Proposition 7 \cite{FuGyKe14},
which states that conditioning on large maximum the limit is deterministic.

By Proposition 6 in \cite{FuGyKe14} for each $j \in \Z$
\begin{equation} \label{eq:typical-max-cond}
\sup_{x \in \R}
\left| \p \left\{ \frac{S_n}{n } - \log_2 n \leq x
\Big| X_n^* = 2^{\lceil \log_2 n \rceil + j } \right\}
- \widetilde G_{j, \gamma_n}  (x)  \right| \to 0,
\end{equation}
where
\[ 
\widetilde G_{j, \gamma}  (x) =
\sum_{m=1}^\infty G_{j-1,\gamma}  \left( x - m 2^{j}/\gamma \right) r_{j, \gamma} (m),
\] 
with
\[
r_{j,\gamma}(m)  = \frac{(2^{-j} \gamma )^m}{m!} \left( e^{2^{-j} \gamma} - 1 \right)^{-1},
\quad m \geq 1.
\] 
The distribution $(r_{j,\gamma}(m))_{m \geq 1}$ is  
Poisson-distribution conditioned on being nonzero. 
From Proposition II.2.7 in \cite{SvH} it follows that 
this distribution is \emph{not} infinitely divisible.
In Theorem 1 in \cite{FuGyKe14} we showed that
for any $\gamma \in [1/2, 1]$
\[
G_{\gamma} (x) =
\sum_{j=-\infty}^\infty \widetilde G_{j, \gamma}  (x) p_{j,\gamma}.
\]
For the trimmed sum we have the following the merging theorem,
together with the infinite series representation of the limiting distribution
function.

\begin{theorem} \label{th:trimmed}
We have
\[
\sup_{x \in \mathbb{R}} \left|
\p\left\{ \frac{S_n-X_n^*}{n} - \log_2 n \leq x \right\}  -
G^*_{\gamma_n}(x) \right| \to 0, \quad \text{as } n \to \infty,
\]
where
\begin{equation} \label{eq:G-star-df}
G^*_{\gamma} (x) =
\sum_{j=-\infty}^\infty
\sum_{m=1}^\infty G_{j-1,\gamma} ( x - (m-1) 2^{j}/\gamma ) \, r_{j, \gamma} (m) \,
p_{j,\gamma}, \quad \gamma \in (1/2,1].
\end{equation}
\end{theorem}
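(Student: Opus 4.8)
The plan is to condition on the value of the maximum $X_n^*$ and reuse the conditional merging theorem \eqref{eq:typical-max-cond}, whose right–hand side already carries the weights $r_{j,\gamma}$ and the truncated–sum limits $G_{j-1,\gamma}$. The elementary fact driving the argument is that $S_n-X_n^*=S_{n,1}$ is obtained from $S_n$ by deleting the single largest observation, so on the event $\{X_n^*=2^{\lceil\log_2 n\rceil+j}\}$ one simply has $S_n-X_n^*=S_n-2^{\lceil\log_2 n\rceil+j}$ and hence
\[
\frac{S_n-X_n^*}{n}-\log_2 n=\Bigl(\frac{S_n}{n}-\log_2 n\Bigr)-\eta_{j,\gamma_n},\qquad\eta_{j,\gamma_n}=\frac{2^{j}}{\gamma_n},
\]
on that event. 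Thus \eqref{eq:typical-max-cond} gives, for each fixed $j$,
\[
\sup_{x\in\R}\Bigl|\,\p\Bigl\{\tfrac{S_n-X_n^*}{n}-\log_2 n\le x\ \Big|\ X_n^*=2^{\lceil\log_2 n\rceil+j}\Bigr\}-\widetilde G_{j,\gamma_n}\bigl(x+\eta_{j,\gamma_n}\bigr)\Bigr|\to 0,
\]
and substituting $x+\eta_{j,\gamma}$ into the series defining $\widetilde G_{j,\gamma}$ and factoring yields
\[
\widetilde G_{j,\gamma}(x+\eta_{j,\gamma})=\sum_{m=1}^{\infty}G_{j-1,\gamma}\bigl(x-(m-1)2^{j}/\gamma\bigr)\,r_{j,\gamma}(m)=:\widetilde G^{*}_{j,\gamma}(x),
\]
which is precisely the inner ($m$–)series in \eqref{eq:G-star-df}.

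Next I would apply the law of total probability over the values of the maximum,
\[
\p\Bigl\{\tfrac{S_n-X_n^*}{n}-\log_2 n\le x\Bigr\}=\sum_{j\ge 1-\lceil\log_2 n\rceil}\p\Bigl\{\tfrac{S_n-X_n^*}{n}-\log_2 n\le x\ \Big|\ X_n^*=2^{\lceil\log_2 n\rceil+j}\Bigr\}\,\p\bigl\{X_n^*=2^{\lceil\log_2 n\rceil+j}\bigr\},
\]
fix $\varepsilon>0$, and truncate the $j$–sum. Since $p_{j,\gamma}\le\gamma 2^{-j}$ for $j\ge 0$ and $p_{j,\gamma}\le e^{-2^{-j}/2}$ for $j<0$, the series $\sum_{j}p_{j,\gamma}$ is bounded and its tails vanish uniformly for $\gamma\in[1/2,1]$; pick $J$ with $\sum_{|j|>J}p_{j,\gamma}<\varepsilon$ for all such $\gamma$, so by \eqref{eq:max-pr-merge} also $\sum_{|j|>J}\p\{X_n^*=2^{\lceil\log_2 n\rceil+j}\}<2\varepsilon$ for all large $n$. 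On the finite block $|j|\le J$ there are only finitely many terms, each a product of a conditional probability uniformly (in $x$) within $o(1)$ of $\widetilde G^{*}_{j,\gamma_n}(x)$ (first step) and a maximum probability within $o(1)$ of $p_{j,\gamma_n}$ (by \eqref{eq:max-pr-merge}); as $0\le\widetilde G^{*}_{j,\gamma}\le 1$ and $0\le p_{j,\gamma}\le 1$, these errors add to $o(1)$. Combining and then letting $J\to\infty$, using the uniform summability of $\sum_{j}p_{j,\gamma}$ once more, gives $\sup_{x}\bigl|\p\{(S_n-X_n^*)/n-\log_2 n\le x\}-\sum_{j}\widetilde G^{*}_{j,\gamma_n}(x)\,p_{j,\gamma_n}\bigr|\to 0$, and the last series is $G^{*}_{\gamma_n}(x)$ of \eqref{eq:G-star-df}.

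I expect the only genuinely delicate point to be the interchange of the limits $n\to\infty$ and $J\to\infty$, i.e.\ the uniformity in $n$ of the $j$–truncation; this is exactly what the $O(n^{-1})$ estimate \eqref{eq:max-pr-merge} together with the uniform–in–$\gamma$ summability of $\sum_j p_{j,\gamma}$ provide, so the step goes through, and everything else (the two summability bounds, the finite–block error bookkeeping) is routine. If one prefers not to quote \eqref{eq:typical-max-cond}, the same result follows by conditioning jointly on $X_n^*$ and on the multiplicity $M$ of the maximum: on $\{X_n^*=2^{\lceil\log_2 n\rceil+j},\,M=m\}$ the remaining $n-m$ variables are i.i.d.\ with law $F_{\lceil\log_2 n\rceil+j-1}$ and $S_n-X_n^*=(m-1)2^{\lceil\log_2 n\rceil+j}+S_{n-m}^{(\lceil\log_2 n\rceil+j-1)}$, so Corollary~2 of \cite{FuGyKe14} applies to the truncated sum — its limit depends on $(j,\gamma_n)$ only through the invariant $\eta_{j,\gamma_n}=2^{j}/\gamma_n$, so the at–most–one–unit change of $\lceil\log_2(n-m)\rceil$ when $n$ is just above a power of $2$ causes no inconsistency — while $\p\{M=m\mid X_n^*=2^{\lceil\log_2 n\rceil+j}\}\to r_{j,\gamma_n}(m)$, \eqref{eq:max-pr-merge} controls the maxima, and Lemma~\ref{lemma:sntail} supplies the tail bound needed to truncate the $m$–sum. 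Both routes deliver the formula \eqref{eq:G-star-df}.
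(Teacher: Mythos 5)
Your proposal is correct and follows essentially the same route as the paper: condition on the value of the maximum, apply the uniform conditional merging \eqref{eq:typical-max-cond} with the shift by $2^j/\gamma_n$ (noting $\widetilde G_{j,\gamma}(x+2^j/\gamma)$ is exactly the inner $m$-series of \eqref{eq:G-star-df}), and then sum over $j$ using \eqref{eq:max-pr-merge} with a truncation in $j$ — the step the paper compresses into ``the same conditioning as in the proof of Theorem 1 in \cite{FuGyKe14}.'' Your write-up merely fills in the uniform-summability bookkeeping that the paper leaves implicit.
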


\begin{proof}
Since (\ref{eq:typical-max-cond}) holds uniformly in $x$, we obtain
\[
\sup_{x \in \R}
\left| \p \left\{ \frac{S_n - X_n^*}{n } - \log_2 n \leq x \Big| X_n^* = 2^{\lceil \log_2 n \rceil + j } \right\}
- \widetilde G_{j, \gamma_n}  (x + 2^j / \gamma_n )  \right| \to 0.
\]
Using (\ref{eq:max-pr-merge}) and the same conditioning as in the proof of
Theorem 1 in \cite{FuGyKe14} we obtain the statement.
\end{proof}

This result implies that, as usual in this setup, along subsequences
there is distributional convergence. For the subsequence $n_k = \lfloor \gamma 2^k \rfloor$,
$\gamma \in [1/2, 1]$, which in fact covers all the possible limits, this
was shown by Gut and Martin-L\"of in Theorem 6.1 \cite{GML}.

The infinite series representation of $G_\gamma$ in Theorem 1 in \cite{FuGyKe14}
is in fact equivalent to the distributional representation
\[
W_\gamma \stackrel{\mathcal{D}}{=} W_{Y_\gamma -1, \gamma} + M_{Y_\gamma, \gamma}
2^{Y_\gamma} \gamma^{-1},
\]
where $(W_{j,\gamma})_{j \in \Z}$, $(M_{j,\gamma})_{j \in \Z}$ and $Y_\gamma$
are independent random variables, $Y_\gamma$ has probability distribution
$(p_{j,\gamma})_{j \in \Z}$, $M_{j,\gamma}$ has Poisson($\gamma 2^{-j}$) distribution,
conditioned on not being 0, and $W_{j,\gamma}$ is an infinitely divisible distribution
given in (\ref{eq:wjg-def}). 
Let $W_\gamma^*$ be a random variable with distribution function $G_\gamma^*$. Then,
the same way (\ref{eq:G-star-df}) reads as
\[ 
W_\gamma^* \stackrel{\mathcal{D}}{=} W_{Y_\gamma -1, \gamma} + (M_{Y_\gamma, \gamma} -1)
2^{Y_\gamma} \gamma^{-1}.
\] 
Looking at the infinitely divisible random variable $W_\gamma$ as a semistable L\'evy
process at time 1, the meaning of the representation above is the following.
The value $2^{Y_\gamma} / \gamma$ corresponds to the maximum jump, $M_{Y_\gamma, \gamma}$
is the number of the maximum jumps, and $W_{Y_\gamma -1, \gamma}$ has the law of the
L\'evy process conditioned on that the maximum jump is strictly less than $2^{Y_\gamma} / \gamma$.
This kind of distributional representations for general L\'evy processes were obtained
by Buchmann, Fan and Maller, see Theorem 2.1 in \cite{BFM}.

\subsection{Representation of the $r$-trimmed limit}

Let $E_k, k=1, 2, \ldots$ be i.i.d.~Exp(1) random variables and $Z_k=E_1 + \ldots + E_k$.

\begin{lemma} \label{lemma:Ymoment}
For any $\gamma>0$, the sum
\begin{equation} \label{eq:defYr}
Y_{r,\gamma} = \sum_{k=r+1}^\infty \left(\frac{\Psi(Z_k/\gamma)}{Z_k} - \frac{\Psi (k/\gamma)}{k}\right)
\end{equation}
converges absolutely with probability 1 and its sum belongs to $L_p$ for any $1\le p < r+1$.
\end{lemma}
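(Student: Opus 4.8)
The plan is to write the general term as $a_k := \Psi(Z_k/\gamma)/Z_k - \Psi(k/\gamma)/k$ and to exploit that $\Psi(y)/y = 2^{-\lfloor \log_2 y\rfloor}$, so that $a_k = f(Z_k) - f(k)$ with $f(x) := \gamma^{-1}2^{-\lfloor \log_2(x/\gamma)\rfloor}$. Two elementary facts about $f$ drive everything: first, $1/x \le f(x) < 2/x$ for all $x>0$, hence $|a_k| \le f(Z_k)+f(k) < 2/Z_k + 2/k$; second, $f$ is constant on each dyadic block $[\gamma 2^j,\gamma 2^{j+1})$, $j\in\Z$, and strictly decreasing from block to block. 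The second fact forces $a_k=0$ unless $Z_k$ and $k$ lie in different blocks, i.e.\ unless some point $\gamma 2^j$ lies between $Z_k$ and $k$; since such a point is then at least as close to $k$ as $Z_k$ is, this yields the crucial implication $a_k\neq 0 \Rightarrow |Z_k - k| \ge d_k$, where $d_k := \min_{j\in\Z}|k-\gamma 2^j|$. Moreover, if $k\in[\gamma 2^j,\gamma 2^{j+1})$ then $k\ge\gamma 2^j$ and $d_k = \min(k-\gamma 2^j,\ \gamma 2^{j+1}-k) \le \tfrac12\gamma 2^j \le k/2$.

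Next I would combine this with concentration of $Z_k - k = \sum_{i=1}^k(E_i-1)$: a standard Chernoff bound for sums of centred exponentials gives $\p\{|Z_k - k|\ge t\} \le 2e^{-t^2/(4k)}$ for $0\le t\le k$, hence $\p\{a_k\neq 0\} \le 2e^{-d_k^2/(4k)}$. For $q\ge 1$ and $k>2q$, Cauchy–Schwarz applied on the event $\{a_k\neq 0\}$ gives $\E|a_k|^q \le (\E|a_k|^{2q})^{1/2}\,\p\{a_k\neq 0\}^{1/2}$, and since $\E|a_k|^{2q} \le 2^{2q-1}\big(2^{2q}\E Z_k^{-2q} + (2/k)^{2q}\big)$ with $\E Z_k^{-2q} = \Gamma(k-2q)/\Gamma(k) = O(k^{-2q})$, one obtains $\|a_k\|_q \le C_q\, k^{-1}e^{-d_k^2/(8qk)}$ for all $k$ exceeding some $K(q)$, the constants depending only on $q$ and $\gamma$. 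The main step is then to sum this over $k$ by grouping the indices according to the dyadic block containing $k$: on $[\gamma 2^j,\gamma 2^{j+1})$ one has $k^{-1}\le(\gamma 2^j)^{-1}$ and $k<\gamma 2^{j+1}$, while as $k$ runs over the integers of that block $d_k$ first increases then decreases in unit steps, so each value is attained at most twice and the block sum $\sum_k e^{-d_k^2/(8q\gamma 2^{j+1})}$ is at most $2+\sqrt{8\pi q\gamma 2^{j+1}}\asymp\sqrt{\gamma 2^j}$. Hence that block contributes $O_q\big((\gamma 2^j)^{-1/2}\big)$ to $\sum\|a_k\|_q$, and summing the geometric series in $j$ gives $\sum_{k\ge K(q)}\|a_k\|_q<\infty$ for every $q\ge 1$.

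From these bounds both conclusions follow. For the a.s.\ absolute convergence, split $\sum_{k\ge r+1}|a_k| = \sum_{r+1\le k<K(1)}|a_k| + \sum_{k\ge K(1)}|a_k|$: the first sum is a.s.\ finite (finitely many terms, each a.s.\ finite as $Z_k>0$ a.s.), while the second has expectation $\sum_{k\ge K(1)}\|a_k\|_1<\infty$ and therefore converges a.s. For the $L_p$ statement with $1\le p<r+1$, Minkowski's inequality gives $\|Y_{r,\gamma}\|_p \le \sum_{k\ge r+1}\|a_k\|_p$; the tail $\sum_{k\ge K(p)}\|a_k\|_p$ is finite by the previous paragraph, and each of the finitely many initial terms satisfies $\E|a_k|^p \le 2^{p-1}\big(2^p\E Z_k^{-p} + (2/k)^p\big)<\infty$, since $\E Z_k^{-p}=\Gamma(k-p)/\Gamma(k)<\infty$ whenever $p<k$, which holds because $p<r+1\le k$.

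I expect the main obstacle to be the summation step rather than the integrability of individual terms. The crude bound $\|a_k\|_q\asymp k^{-1}$, which is all that $|a_k|<2/\min(Z_k,k)$ alone provides, is not summable, so one genuinely has to use both the oscillation of $\Psi$ (the implication $a_k\neq 0\Rightarrow|Z_k-k|\ge d_k$ together with Gaussian concentration) and the dyadic grouping in order to convert the $\asymp k^{-1}$ bound per index into a summable $\asymp(\gamma 2^j)^{-1/2}$ bound per block. A secondary point worth noting is that the restriction $p<r+1$ is imposed solely by the first summand $a_{r+1}=f(Z_{r+1})-f(r+1)$: since $Z_{r+1}\sim\mathrm{Gamma}(r+1,1)$ and $f(Z_{r+1})\asymp 1/Z_{r+1}$, this term lies in $L_p$ precisely for $p<r+1$, whereas every later term is in all $L_p$.
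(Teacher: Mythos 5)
Your proof is correct, and it takes a genuinely different route from the paper's. The paper bounds $\bigl|\Psi(Z_k/\gamma)/Z_k-\Psi(k/\gamma)/k\bigr|$ by the sum of two terms, $I_k=2|Z_k-k|/(kZ_k)$ and $J_k=|\Psi(Z_k/\gamma)-\Psi(k/\gamma)|/k$, controls $\|I_k\|_p=O(k^{-3/2})$ via H\"older combined with Rosenthal's inequality and negative Gamma moments, and controls $J_k$ by a large-deviation localization $|Z_k-k|\le k^{2/3}$ plus the slope $2^{-m}$ of $\Psi$ on the common dyadic block, with a separate counting argument for the $O(2^{2m/3})$ indices per block that sit too close to a discontinuity of $\Psi$. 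You instead observe that $\Psi(x/\gamma)/x=\gamma^{-1}2^{-\lfloor\log_2(x/\gamma)\rfloor}$ is \emph{piecewise constant}, so the $k$-th summand vanishes identically unless $Z_k$ crosses a dyadic boundary $\gamma 2^j$, i.e.\ unless $|Z_k-k|\ge d_k$; this exact cancellation removes the paper's $I_k$ term entirely and reduces everything to the Gaussian-type bound $\p\{a_k\ne 0\}\le 2e^{-d_k^2/(4k)}$ (valid since $d_k\le k/2$), a Cauchy--Schwarz split of $\E|a_k|^q$ on the event $\{a_k\neq 0\}$, and the dyadic block summation $\sum_{k\in[\gamma 2^j,\gamma 2^{j+1})}e^{-d_k^2/(ck)}=O(2^{j/2})$, giving $O(2^{-j/2})$ per block. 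Your approach buys a cleaner mechanism (no Rosenthal, no separate treatment of $I_k$, and the "bad" indices near block boundaries are handled automatically by the Gaussian weight rather than by counting); the paper's approach is closer in spirit to the perturbation estimate it reuses later in the proofs of Theorems 3 and 4, where the argument of $\Psi$ is only approximately $Z_j/\gamma$ and the exact cancellation is no longer available. Your closing remark that the restriction $p<r+1$ is forced solely by the first summand, since $f(Z_{r+1})\ge 1/Z_{r+1}$ with $Z_{r+1}\sim\mathrm{Gamma}(r+1,1)$, is a correct sharpness observation not made in the paper.
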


\begin{proof}
We have
\begin{align} \label{2}
& \left|\frac{\Psi(Z_k/\gamma)}{Z_k} - \frac{\Psi (k/\gamma)}{k}\right|\le \Psi (Z_k/\gamma) \left| \frac{1}{Z_k} 
- \frac{1}{k}\right|+ |\Psi(Z_k/\gamma)-\Psi(k/\gamma)| \frac{1}{k} \nonumber \\
& \le 2 |Z_k-k| \frac{1}{kZ_k} + |\Psi(Z_k/\gamma)-\Psi(k/\gamma)| \frac{1}{k} =: I_k+J_k. 
\end{align}
By the H\"older inequality we have for any $p \geq 1$  and any $P, Q>1$ with $1/P+1/Q=1$,
\begin{equation}\label{3}
\E (|Z_k-k|^p (kZ_k)^{-p}) \le k^{-p} \left( \E (|Z_k-k|^{pP}\right)^{1/P} (\E Z_k^{-pQ})^{1/Q}.
\end{equation}
By the Rosenthal inequality (\cite[Theorem 2.9]{Petrov}) we have 
\begin{equation}\label{4}
\E (|Z_k-k|^{pP}) \leq c_1 k^{pP/2},
\end{equation}
with some constant $c_1 > 0$ depending only on $pP$. In the following $c_2, c_3, \ldots$ are
positive constants, whose values are not important.
On the other hand, $Z_k$ is $\mathrm{Gamma}(k, 1)$ distributed and thus for any $\beta>0$ we have
\[
\E (Z_k^{-\beta})= \int_0^\infty \frac{1}{x^{\beta}} \frac{ x^{k-1}}{\Gamma (k)} e^{-x} \d x
= \frac{\Gamma (k-\beta)}{\Gamma (k)} \leq c_2 k^{-\beta},
\]
for $k>\beta$, and thus in (\ref{3}) we have
\begin{equation}\label{5}
\E (Z_k^{-pQ})^{1/Q} \leq c_3 k^{-p}, \qquad \text{for} \ k>pQ.
\end{equation}
Since $p<r+1$, we can choose $Q>1$ so close to 1 that for $k\ge r+1$ we have $k>pQ$ and thus (\ref{5}) holds.
Choosing $Q$ close to 1 will make $P=Q/(Q-1)$ very large, but (\ref{4}) is still valid. Therefore, the left hand
side of (\ref{3}) is $\leq c_4 k^{-3p/2}$, and consequently in (\ref{2}) we have
\begin{equation}\label{6}
\| I_k \|_p \leq  c_5 k^{-3/2},  \quad \text{for} \ k \ge r+1.
\end{equation}

To estimate $J_k$ we first observe that by large deviation theory we have $|Z_k-k|\le k^{2/3}$
except on a set $A_k$ with $\p \{ A_k \} \le a \exp (-k^\delta)$ $(k\ge 1)$ for some absolute constants
$a>0$, $\delta>0$. 
To estimate the difference $\Psi(Z_k/\gamma) - \Psi(k/\gamma)$ we have to make sure that
$Z_k/\gamma$ and $k/\gamma$ fall into the same dyadic interval.
Note that when $k/\gamma$ or $Z_k/\gamma$ is close to a discontinuity point of $\Psi$, i.e.~to
an integer power of 2, then we cannot give a good estimate.
Therefore assume that $2^j + 2^{2(j+1)/3} \leq k/\gamma \leq 2^{j+1} - 2^{2(j+1)/3}$.
Then on the set $A_k^c$ we have $Z_k/\gamma \in [2^j, 2^{j+1}]$
and thus  $|\Psi (Z_k/\gamma) - \Psi(k/\gamma)| \leq 2^{1-j} |Z_k - k| \leq 4 k^{-1/3}$.
Therefore, for such $k$'s in (\ref{2}) we have
$J_k\leq 4 k^{-4/3}$ except on $A_k$,  and on $A_k$ trivially $J_k \le 2$. Thus we proved
\[ 
\| J_k \|_p \leq c_6 k^{-4/3},  \qquad \text{for} \ k \ge r+1, k \in M,
\] 
with
\[
M = \cup_{j=1}^\infty \left[ \gamma (2^j + 2^{2(j+1)/3}), \gamma (2^{j+1} - 2^{2(j+1)/3}) \right].
\]
For $k \not\in M$ we only have that $J_k \leq 2/k$, but since there are not so many such $k$'s it
is enough, more precisely
\[
\sum_{k \not \in M} \|J_k \|_p \leq \sum_{j=1}^\infty 4 \cdot 2^{2(j+1)/3} 2^{-j} < \infty. 
\]
Consequently by (\ref{2}), (\ref{6}) we proved that
\[
\sum_{k=r+1}^\infty \left\|\frac{\Psi(Z_k/\gamma)}{Z_k} - \frac{\Psi (k/\gamma)}{k}\right\|_p <\infty, \qquad \text{for} \ p<r+1,
\]
completing the proof of the lemma.
\end{proof}

In the following theorem we need that the convergence of $\gamma_{n_k}$ to some limit $\gamma$ is fast enough.
However, the natural subsequence $n_k = \lfloor \gamma 2^k \rfloor$ satisfies this condition.

\begin{theorem} \label{th:trimlim}
Assume that $\gamma_{n_k} = \gamma + O(n_k^{-1/5})$ along the subsequence $n_k$, where $\gamma \in (1/2, 1]$.
Then for any $r \geq 0$
\[ 
\frac{1}{n_k}S_{n_k,r}- a_{n_k,\gamma}^{(r)}
\stackrel{\mathcal{D}}{\longrightarrow} Y_{r,\gamma},
\] 
with centering sequence
\[ 
a_{n,\gamma}^{(r)} = \sum_{j=r+1}^n \frac{\Psi(j/\gamma)}{j}.
\] 
\end{theorem}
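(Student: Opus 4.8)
The plan is to realize $S_{n_k,r}$ and $Y_{r,\gamma}$ on one probability space and to upgrade the conclusion to convergence in probability. Combining the quantile representation (\ref{eq:quantile-uni}) with R\'enyi's representation $(U_{1n},\dots,U_{nn})\stackrel{\mathcal D}{=}(Z_1/Z_{n+1},\dots,Z_n/Z_{n+1})$ of the uniform order statistics, and using that $\Psi(y)=2^{\{\log_2 y\}}$ has period $1$ in $\log_2$, i.e.\ $\Psi(y/2^m)=\Psi(y)$ for every $m\in\Z$, one gets
\[
\frac1n S_{n,r}\ \stackrel{\mathcal D}{=}\ \frac{\widehat\gamma_n}{\gamma_n}\sum_{\ell=r+1}^n\frac{\Psi(Z_\ell/\widehat\gamma_n)}{Z_\ell},\qquad
\widehat\gamma_n:=\frac{Z_{n+1}}{2^{\lceil\log_2 n\rceil}}=\frac{Z_{n+1}}{n}\,\gamma_n .
\]
By hypothesis $\gamma_{n_k}-\gamma=O(n_k^{-1/5})$, and $Z_{n_k+1}/n_k-1=O_{\p}(n_k^{-1/2})$ by the central limit theorem, so $\widehat\gamma_{n_k}-\gamma=O_{\p}(n_k^{-1/5})$, $\widehat\gamma_{n_k}/\gamma_{n_k}-1=O_{\p}(n_k^{-1/2})$, and $\widehat\gamma_{n_k}\to\gamma$ a.s.\ It then suffices to show that, along $n=n_k$,
\[
T_n:=\frac{\widehat\gamma_n}{\gamma_n}\sum_{\ell=r+1}^n\frac{\Psi(Z_\ell/\widehat\gamma_n)}{Z_\ell}-\sum_{\ell=r+1}^n\frac{\Psi(\ell/\gamma)}{\ell}\ \longrightarrow\ Y_{r,\gamma}\quad\text{in probability.}
\]

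I would then split $T_n=T_n^{(1)}+R_n+P_n$ with
\[
T_n^{(1)}=\sum_{\ell=r+1}^n\Big(\frac{\Psi(Z_\ell/\gamma)}{Z_\ell}-\frac{\Psi(\ell/\gamma)}{\ell}\Big),\qquad
P_n=\Big(\frac{\widehat\gamma_n}{\gamma_n}-1\Big)\sum_{\ell=r+1}^n\frac{\Psi(Z_\ell/\widehat\gamma_n)}{Z_\ell},
\]
\[
R_n=\sum_{\ell=r+1}^n\frac{\Psi(Z_\ell/\widehat\gamma_n)-\Psi(Z_\ell/\gamma)}{Z_\ell}.
\]
Now $T_n^{(1)}$ is exactly the $n$-th partial sum of the a.s.\ convergent series defining $Y_{r,\gamma}$, so $T_n^{(1)}\to Y_{r,\gamma}$ a.s.\ by Lemma \ref{lemma:Ymoment}; and since $1\le\Psi\le2$ with $\E(1/Z_\ell)=1/(\ell-1)$ for $\ell\ge2$, one has $\sum_{\ell=r+1}^n 1/Z_\ell=O_{\p}(\log n)$ (the lone term $1/Z_1$, occurring only for $r=0$, is a.s.\ finite), hence $P_n=O_{\p}(n^{-1/2}\log n)\to0$. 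Everything therefore reduces to $R_n\to0$ in probability.

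For $R_n$ I would fix $\ell_0$ and work on the event $\Omega_{\ell_0,n}$ on which $|Z_\ell-\ell|\le\ell^{2/3}$ for all $\ell_0\le\ell\le n+1$ and $|\widehat\gamma_n-\gamma|\le\delta_n:=n^{-1/6}$; by the large-deviation bound used in the proof of Lemma \ref{lemma:Ymoment} and the rate for $\widehat\gamma_n$, $\p(\Omega_{\ell_0,n}^c)$ is below any prescribed $\varepsilon$ once $\ell_0$ and $n$ are large. On $\Omega_{\ell_0,n}$ one has $Z_\ell\ge\ell/2$ for $\ell>\ell_0$, so $1/Z_\ell\le2/\ell$ and only the $O(\log n)$ dyadic scales with $\log_2\ell_0-C\le j\le\lceil\log_2 n\rceil+C$ matter. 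The block $r+1\le\ell\le\ell_0$ contributes $o(1)$ a.s.\ (finitely many terms, each $\to0$ because $\widehat\gamma_n\to\gamma$ and $\Psi$ is a.s.\ continuous at the absolutely continuous point $Z_\ell/\gamma$). For $\ell_0<\ell\le n$ I call $\ell$ \emph{good} if $Z_\ell/\gamma$ and $Z_\ell/\widehat\gamma_n$ fall in a common dyadic interval $[2^j,2^{j+1})$, where $\Psi(x)=2^{-j}x$; then, as $Z_\ell<\gamma 2^{j+1}$, $|\Psi(Z_\ell/\widehat\gamma_n)-\Psi(Z_\ell/\gamma)|\le 2^{-j}Z_\ell|\gamma^{-1}-\widehat\gamma_n^{-1}|\le C\delta_n$ uniformly, so the good part of $R_n$ is at most $C\delta_n\sum_{\ell\le n}1/Z_\ell\le 2C\delta_n\log n\to0$, deterministically on $\Omega_{\ell_0,n}$. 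An index is \emph{bad} only when $Z_\ell$ lies within $O(2^j\delta_n)$ of $\gamma2^j$ for one of the admissible $j$; since $\{Z_\ell\}_{\ell\ge1}$ is a rate-$1$ Poisson process, Campbell's formula gives $\E\big[\sum_{\ell:\,Z_\ell\in I_j}Z_\ell^{-1}\big]=\int_{I_j}z^{-1}\d z=O(\delta_n)$ for each such window $I_j$, so $\E\big[\mathbf 1_{\Omega_{\ell_0,n}}(\text{bad part of }R_n)\big]=O(\delta_n\log n)\to0$. Hence $\limsup_n\p\{|R_n|>\varepsilon\}\le\p(\Omega_{\ell_0,n}^c)\le\varepsilon$, and letting $\ell_0\to\infty$ gives $R_n\to0$ in probability, completing the argument.

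The step I expect to be the main obstacle is the control of the bad indices: one must verify that the dyadic neighbourhoods of the powers of $2$ responsible for the jumps of $\Psi$ — widened both by the fluctuation of $Z_\ell$ around $\ell$ and by the discrepancy $\widehat\gamma_n-\gamma$ — are, across all $O(\log n)$ relevant scales at once, thin enough that their total $Z_\ell^{-1}$-weighted mass is $o(1)$. This is exactly where the quantitative hypothesis $\gamma_{n_k}=\gamma+O(n_k^{-1/5})$ enters, and where the truncation at $\ell_0$ is indispensable, since it simultaneously keeps $Z_\ell$ bounded away from $0$ and caps the number of scales that can occur.
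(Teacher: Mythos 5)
Your proposal is correct and follows essentially the same route as the paper's proof: the R\'enyi/Poisson representation of the order statistics, the split into the a.s.\ convergent series of Lemma \ref{lemma:Ymoment}, an $O_P(n^{-1/2}\log n)$ prefactor term, and a good/bad dyadic decomposition to control the discrepancy coming from the discontinuities of $\Psi$. The only genuine variation is local: for the bad indices you use Campbell's formula over narrow multiplicative windows of width $O(2^j\delta_n)$ around the points $\gamma 2^j$ in the $Z$-variable, whereas the paper widens the windows to $2^{2(j+1)/3}$ around the deterministic points $j/\gamma$ so as to absorb the fluctuations of $Z_j$ and then counts bad indices deterministically --- both arguments are valid, and yours is arguably a touch cleaner at that step.
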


\begin{proof}
We rewrite the representation (\ref{eq:quantile-uni}) in terms of the Poisson process determined by $(E_i)_{i \in \N}$.
Since for $n$ fix
\[
(U_{1n}, U_{2n}, \ldots, U_{nn} ) \stackrel{\mathcal{D}}{=}
\left( \frac{Z_1}{Z_{n+1}}, \frac{Z_2}{Z_{n+1}}, \ldots, \frac{Z_n}{Z_{n+1}} \right),
\]
we obtain
\[
\begin{split}
\left( X_{1n}, \ldots, X_{nn} \right)
& \stackrel{\mathcal{D}}{=}
\left( \frac{Z_{n+1}}{Z_1} \Psi(Z_1/Z_{n+1}), \ldots,  \frac{Z_{n+1}}{Z_n} \Psi(Z_n/Z_{n+1}) \right) \\
& =: \left( X_{1n}^*, \ldots, X_{nn}^* \right).
\end{split}
\]
By the strong law of large numbers $Z_{n+1}/n \to 1$ a.s.\ whence it follows
\begin{equation}\label{leftside}
X_{1,n}^* = \frac{n}{Z_1} \Psi\left(\frac{Z_1}{n}\right) (1+o(1))
\qquad \text{a.s.}
\end{equation}
Now if along a subsequence $\gamma_{n_k} \to \gamma \in (1/2,1]$
we obtain, using (\ref{leftside}),
\[
\frac{X_{1,n_k}^*}{n_k} \to 
\frac{1}{Z_1} \Psi\left(\frac{Z_1}{\gamma}\right)
\qquad \text{a.s.}
\]
Note that although $\Psi$ is not continuous, the probability that $Z_1/\gamma$ falls in $2^\Z$ is zero.
Similar formulas apply for $X_{j, n_k}^*/n_k$, and thus we get for any fixed $K \ge 1$
\[ 
\frac{1}{n_k} (X_{1,n_k}, \ldots, X_{K,n_k})
\overset{\mathcal{D}}{\longrightarrow}
\left( \frac{\Psi\left(Z_1/\gamma\right)}{Z_1}, \ldots,
\frac{\Psi\left(Z_K/ \gamma\right)}{Z_K}\right).
\] 
Observe that 
\begin{equation}\label{a1}
\frac{1}{n} S_{n,r} \stackrel{\mathcal{D}}{=}
\sum_{j=r+1}^n \frac{\Psi(Z_j/Z_{n+1})}{nZ_j/Z_{n+1}}=
\frac{Z_{n+1}}{n} \sum_{j=r+1}^n \frac{\Psi(Z_j/Z_{n+1})}{Z_j}.
\end{equation} 
Now by (\ref{a1})
\[ 
\begin{split}
\frac{1}{n_k} S_{n_k,r} - a_{n_k,\gamma}^{(r)} & \overset{\mathcal{D}}{=}
\left(\frac{Z_{n_k+1}}{n_k}-1\right)
\sum_{j=r+1}^{n_k} \frac{\Psi(Z_j/Z_{n_k+1})}{Z_j} \\
& \phantom{=} \, +
\sum_{j=r+1}^{n_k} \left(\frac{\Psi(Z_j/Z_{n_k+1})}{Z_j}-\frac{\Psi(j/\gamma)}{j}\right) \\
& =: U_{n_k}+V_{n_k}.
\end{split}
\] 
By the strong law of large numbers, the sum in $U_n$ is $O(\ln n)$ a.s., further Chebyshev's inequality
implies $|Z_{n+1}/n -1|=O_P(n^{-1/2})$, and thus $U_n\to 0$ in probability.
Rewrite $V_{n_k}$ as
\begin{equation} \label{eq:Vnk}
V_{n_k} = 
\sum_{j=r+1}^{n_k} \hspace{-3pt} \left(\frac{\Psi(Z_j/Z_{n_k+1}) - \Psi(Z_j/\gamma)}{Z_j} \right)
+ \sum_{j=r+1}^{n_k} \hspace{-3pt} \left( \frac{\Psi(Z_j/\gamma)}{Z_j} - \frac{\Psi(j/\gamma)}{j} \right).
\end{equation}
The second sum obviously converges almost surely to $Y_{r,\gamma}$. 

We show that the first sum converges
a.s.~to 0. Using $\E |Z_k-k|^4=O(k^2)$, the Markov inequality and the Borel--Cantelli lemma,
it follows that $Z_{n+1}=n+O(n^{4/5})$ a.s. Here, and in the sequel, constants in the $O$
depend only on $\omega$. Thus for any $1\le j\le n$, by the logarithmic periodicity
of $\Psi$
\begin{equation}\label{a2}
\Psi(Z_j/Z_{n+1})=\Psi \big( (1+O(n^{-1/5}))Z_j/n \big)=\Psi \big( (1+O(n^{-1/5}))Z_j/\gamma_n \big).
\end{equation}
To estimate $|\Psi(Z_j/ Z_{n_k+1}) - \Psi(Z_j/\gamma)|$ we have to use the same idea as in the proof of
Lemma \ref{lemma:Ymoment}. 
Given $1\le j\le n_k$, let $m=m(j)$ be defined by $j/\gamma \in [2^m + 2^{2(m+1)/3}, 2^{m+1}- 2^{2(m+1)/3}]$. 
Then $Z_j\sim j$ a.s.\ implies  $Z_j/\gamma \in (2^m, 2^{m+1}]$, moreover
$(1+O(n_k^{-1/5}))Z_j/\gamma_{n_k} \in [2^m, 2^{m+1}]$, for $j\ge j_0(\omega)$. 
The slope of $\Psi(x)$ on this interval is
$2^{-m}\le 2/j$ and thus replacing $1+O(n^{-1/5})$ by 1 changes the last expression of (\ref{a2})
at most by $O(n^{-1/5}\, (Z_j/\gamma_n)\, (2/j))=O(n^{-1/5})$  i.e., for such $j$'s
\begin{equation}\label{a3}
|\Psi(Z_j/Z_{n_k+1})-\Psi(Z_j/\gamma)|=O \left( n_k^{-1/5} \right), \quad j_0(\omega) \leq j \leq n_k, \ j \in M,
\end{equation}
where $M = \cup_{m=1}^\infty [2^m + 2^{2(m+1)/3}, 2^{m+1}- 2^{2(m+1)/3}]$.
Here we used that $\gamma_{n_k} = \gamma + O(n_k^{-1/5})$.
We emphasize that (\ref{a3}) is not true for all $j \leq n$. For the sum of these terms
\[
\sum_{j_0(\omega) \leq j \leq n_k, j \in M} \frac{|\Psi(Z_j/Z_{n_k+1}) - \Psi(Z_j/\gamma)|}{Z_j}
= O( n_k^{-1/5} \ln n_k).
\]
The remaining indices can be estimated exactly as in the proof of Lemma \ref{lemma:Ymoment}, and
thus we obtain that the first sum in (\ref{eq:Vnk}) converges a.s.
On the other hand, for each $j$
\[
\frac{\Psi(Z_j/Z_{n_k+1})}{Z_j} \to \frac{\Psi(Z_j/\gamma)}{Z_j} \quad \text{a.s.}
\]
so the almost sure limit is 0. The convergence of the series (\ref{eq:defYr})
imply that $V_{n_k} \to Y_{r,\gamma}$ a.s.\ as $n_k \to\infty$, completing the proof of the theorem. 
\end{proof}

\subsection{Around the centering}

The definition of the centering sequence $a_{n,\gamma}^{(r)}$ is quite natural in view of
Lemma \ref{lemma:Ymoment}. However, its asymptotic behavior is not immediately clear, and more importantly,
it is not continuous as a function of $\gamma$. This is the reason why we cannot prove the merging
counterpart of Theorem \ref{th:trimlim}. In this section we gather some information about $a_{n,\gamma}^{(r)}$.

Using that $\Psi(z/\gamma)/ z = 2^{- \lfloor \log_2 (z/\gamma) \rfloor} / \gamma$ we obtain
\[
\begin{split}
a_{n,\gamma}^{(0)} & = \sum_{k=1}^n \frac{\Psi (k/ \gamma)}{k} 
= \sum_{m=0}^{ m_n -1} \sum_{k=\lceil \gamma 2^m \rceil}^{\lceil \gamma 2^{m+1} \rceil -1}
\frac{2^{-m}}{\gamma} + \frac{n - \lceil \gamma 2^{m_n} \rceil+1}{\gamma 2^{m_n}} \\
& = \!\! \sum_{m=0}^{ m_n -1} \frac{\lceil \gamma 2^{m+1} \rceil - \lceil \gamma 2^{m} \rceil }{\gamma 2^m}
+ \frac{n - \lceil \gamma 2^{m_n} \rceil+1}{\gamma 2^{m_n}} 
= \frac{n+1}{\gamma 2^{m_n}} - \frac{1}{\gamma} + \sum_{m=1}^{m_n} \frac{\lceil \gamma 2^{m} \rceil}{\gamma 2^m},
\end{split}
\]
with $m_n = \lfloor \log_2 (n+1)/\gamma \rfloor$.
Thus
\begin{equation} \label{eq:an-form}
a_{n,\gamma}^{(0)} - m_n = \frac{n+1}{\gamma 2^{m_n}} - \gamma^{-1} + 
\sum_{m=1}^{m_n} \frac{\lceil \gamma 2^{m} \rceil - \gamma 2^m}{\gamma 2^m}
\leq \frac{n+1}{\gamma 2^{m_n}} < 4.
\end{equation}
For the subsequence $n_k = 2^k$ and for $\gamma = 1$ we see that
\[
a_{n_k,1}^{(0)} - \log_2 n_k = \frac{1}{n_k} \to 0,
\]
and comparing (\ref{eq:sum-merge}) and Theorem \ref{th:trimlim}, this means that $W_1 = Y_{0,1}$. 

In general, for any $\gamma \in (1/2, 1]$ it is easy to see that $m_{n_k} - k \to 0$ 
on the subsequence $n_k = \lfloor \gamma 2^k \rfloor$, that is
\[
m_{n_k} - \log_2 n_k \to - \log_2 \gamma.
\]
Combined with (\ref{eq:an-form}) this implies that our centering is (apart from a constant)
the same as the usual $\log_2 n$ centering in (\ref{eq:sum-merge}).
More precisely, for any $\gamma \in (1/2, 1]$, $n_k = \lfloor \gamma 2^k \rfloor$,
\begin{equation} \label{eq:anlimit}
a_{n_k, \gamma}^{(0)} - \log_2 n_k \to
1 - \gamma^{-1} + \sum_{m=1}^\infty \frac{ \lceil \gamma 2^m \rceil - \gamma 2^m}{\gamma 2^m}
- \log_2 \gamma.
\end{equation}

In the limit in (\ref{eq:anlimit}) appears the function
\[
f(\gamma) = \sum_{m=1}^\infty \frac{\lceil 2^m \gamma \rceil - 2^m \gamma}{2^m \gamma}, \quad \gamma \in (1/2,1].
\]
The function
\[
\xi(\gamma) = 2 - \frac{1}{\gamma} \sum_{k=1}^\infty \frac{k \varepsilon_k}{2^k} - \log_2 \gamma, \quad \gamma \in [1/2, 1],
\]
was introduced by Cs\"org\H{o} and Simons \cite{CsS}, see also Kern and Wedrich \cite{KW}.
Here $\gamma = \sum_{k=1}^\infty \varepsilon_k 2^{-k}$, where $\varepsilon_k$'s are the dyadic digits of $\gamma$.
This function appears naturally if one considers Steinhaus' resolution of the St.~Petersburg paradox, see \cite{CsS}.
Cs\"org\H{o} and Simons  \cite[Theorem 3.1]{CsS} show that $\xi$ is right-continuous, left-continuous
except at dyadic rationals greater than $1/2$ and has unbounded variation. Moreover, Kern and Wedrich \cite[Theorem 3.1]{KW}
proved that the Hausdorff and box-dimension of the graph of $\xi$ is 1, meaning that $\xi$ is not so irregular.
These properties remain true for our function $f$, as it turns out that
\[
f(\gamma) = \xi(\gamma) + \log_2 \gamma - 1 + \frac{1}{\gamma}.
\]
For any $\gamma = \sum_{k=1}^\infty \varepsilon_k 2^{-k}$ we consider the representation, which contains infinitely many 1's.
Then
\[
\lceil 2^m \gamma \rceil = 2^m \sum_{k=1}^m \varepsilon_k 2^{-k} + I(\exists k > m, \varepsilon_k =1),
\]
and so
\[ 
\lceil 2^m \gamma \rceil -  2^m \gamma = I(\exists k > m, \varepsilon_k =1) - \sum_{k= m +1}^\infty \varepsilon_k 2^{m-k}.
\] 
Thus
\[
\begin{split}
f(\gamma) & = \sum_{m=1}^\infty \frac{1}{2^m \gamma} \left( I(\exists k > m, \varepsilon_k =1) - 
\sum_{k=m+1}^\infty \varepsilon_k 2^{m-k} \right) \\
& = \frac{1}{\gamma} - \frac{1}{\gamma}  \sum_{m=1}^\infty \sum_{k=m+1}^\infty \varepsilon_k 2^{-k} 
= \frac{1}{\gamma} - \frac{1}{\gamma}  \sum_{k=1}^\infty (k-1) \varepsilon_k 2^{-k} \\
& = \frac{1}{\gamma} + 1  - \frac{1}{\gamma}  \sum_{k=1}^\infty k \varepsilon_k 2^{-k} 
= \xi(\gamma) + \log_2 \gamma - 1 + \frac{1}{\gamma}.
\end{split}
\]
The limit appearing in (\ref{eq:anlimit}) is exactly $\xi(\gamma)$.

\subsection{Uniform tail bound for the trimmed sums}

In this section we obtain a uniform tail bound for the centralized and normalized trimmed sum.

\begin{theorem} \label{th:trimtail}
For any $\delta \in (0,1)$,  $x \geq e$ and $n \geq 1$ there is a finite
constant $C > 0$, such that
\[
\p \left\{   \frac{S_{n,r}}{n} - a_{n,\gamma_n}^{(r)} > x \right\} 
\leq \frac{2^{r+1}}{(r+1)!} {[(1-\delta)x]^{-(r+1)}} + C \, \delta^{-(r+3/2)} x^{-(r+3/2)}.
\]
\end{theorem}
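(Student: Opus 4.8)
Here is the plan.

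\medskip

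The plan is to peel off the largest order statistics of $S_{n,r}$ one at a time against a geometrically shrinking sequence of thresholds, and to bound each peeled‐off piece by the order‐statistic tail estimate already used in the proof of Theorem~\ref{th:Snr-tail}. Two quick reductions come first. Since $a_{n,\gamma_n}^{(r)}=\sum_{j=r+1}^{n}\Psi(j/\gamma_n)/j>0$, we have $\{S_{n,r}/n-a_{n,\gamma_n}^{(r)}>x\}\subseteq\{S_{n,r}>nx\}$, so it suffices to bound $\p\{S_{n,r}>nx\}$; and if $\delta x<1$ then $\delta^{-(r+3/2)}x^{-(r+3/2)}=(\delta x)^{-(r+3/2)}>1$, so the claimed bound holds trivially once $C\ge 1$ and we may assume $\delta x\ge 1$.

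For the peeling, put $B_0=nx$ and $B_k=\delta nx\,2^{-(k-1)}$ for $k\ge 1$, so that $B_0-B_1=(1-\delta)nx$ and $B_{k-1}-B_k=B_k$ for $k\ge 2$. From the order‐statistic identity $S_{n,r+k-1}=X_{r+k,n}+S_{n,r+k}$ together with $(B_{k-1}-B_k)+B_k=B_{k-1}$ one gets
\[
\{S_{n,r+k-1}>B_{k-1}\}\subseteq\{X_{r+k,n}>B_{k-1}-B_k\}\cup\{S_{n,r+k}>B_k\}.
\]
Applying this for $k=1,2,\dots,n-r$ and using $S_{n,n}=0\le B_{n-r}$ to terminate the recursion, we obtain
\[
\p\{S_{n,r}>nx\}\le\p\{X_{r+1,n}>(1-\delta)nx\}+\sum_{k=2}^{n-r}\p\{X_{r+k,n}>\delta nx\,2^{-(k-1)}\},
\]
the sum being empty when $n=r+1$.

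Each probability is then estimated exactly as in (\ref{eq:I3}): via the quantile representation (\ref{eq:quantile-uni}) and the density of $U_{r+k,n}$,
\[
\p\{X_{r+k,n}>y\}=\p\{U_{r+k,n}<2^{-\lfloor\log_2 y\rfloor}\}\le\binom{n}{r+k}\big(2^{-\lfloor\log_2 y\rfloor}\big)^{r+k}\le\binom{n}{r+k}(2/y)^{r+k}
\]
for every $y>0$. With $k=1$ and $y=(1-\delta)nx$ this gives $\binom{n}{r+1}(2/((1-\delta)nx))^{r+1}\le\frac{2^{r+1}}{(r+1)!}[(1-\delta)x]^{-(r+1)}$, which is precisely the first term in the theorem. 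For $2\le k\le n-r$ and $y=\delta nx\,2^{-(k-1)}$,
\[
\binom{n}{r+k}\Big(\frac{2^{k}}{\delta nx}\Big)^{r+k}\le\frac{2^{k(r+k)}}{(r+k)!}\,(\delta x)^{-(r+k)}\le\frac{2^{k(r+k)}}{(r+k)!}\,(\delta x)^{-(r+3/2)},
\]
using $r+k\ge r+2>r+3/2$ and $\delta x\ge 1$. Summing over $k$ and using $(\delta x)^{-(r+3/2)}=\delta^{-(r+3/2)}x^{-(r+3/2)}$ yields the assertion with $C=\max\{1,\sum_{k=2}^{n-r}2^{k(r+k)}/(r+k)!\}$, which depends only on $n$ and $r$.

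The delicate point that shapes the whole argument is reaching the exponent $r+3/2$ rather than just $r+1$. Truncating $S_{n,r}$ once at level $\sim(1-\delta)nx$ and invoking the Chernoff bound of Lemma~\ref{lemma:sntail} does \emph{not} suffice: there the truncation level and the target $nx$ are of the same order, so $h(t)/\eta$ stays of order $\log x$ and the bound decays only like $x^{-O(1)}$. The extra half-power beyond $r+1$ is genuinely combinatorial — once the single largest summand has been removed, an exceedance of order $nx$ forces several of the largest order statistics to be large simultaneously — and it is only the full peeling down to the sample minimum $X_{nn}$ that exposes this; it is also the reason $C$ is permitted to depend on $n$. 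The routine checks left are that $S_{n,n}=0$ indeed terminates the recursion, that the order‐statistic tail bound $\p\{X_{r+k,n}>y\}\le\binom{n}{r+k}(2/y)^{r+k}$ holds for all $y>0$ (trivially for $y<2$), and that $\binom{n}{r+k}\le n^{r+k}/(r+k)!$.
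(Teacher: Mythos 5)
Your proof establishes only a version of the theorem in which $C$ is allowed to depend on $n$, and that version is essentially empty: read that way, any probability is trivially at most $C\,\delta^{-(r+3/2)}x^{-(r+3/2)}$ for $C$ large enough. The content of Theorem~\ref{th:trimtail} --- see the subsection title ``Uniform tail bound'' and the $\sup_n$ in the paper's key estimate --- is that $C$ depends only on $r$, uniformly in $n$, $x$ and $\delta$. Your argument cannot deliver this, and the failure occurs at the very first reduction, where you drop the centering: since $a_{n,\gamma_n}^{(r)}\sim\log_2 n$ and $S_{n,r}/n-\log_2 n$ merges to a nondegenerate limit, we have $\p\{S_{n,r}>nx\}\to 1$ as $n\to\infty$ for every fixed $x$, so no bound on $\p\{S_{n,r}>nx\}$ that is uniform in $n$ and decays in $x$ can exist. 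Consistently, your constant $C=\sum_{k=2}^{n-r}2^{k(r+k)}/(r+k)!$ diverges with $n$ (the terms behave like $2^{k^2}/k!\to\infty$). The defect is not curable by retuning the thresholds: any deterministic thresholds $t_j$ for the order statistics $X_{j,n}$ must sum to at most $nx$, hence $t_j\lesssim nx/j$ for most $j$, whereas the typical size of $X_{j,n}$ is of order $n/j$; so for $j\gtrsim x$ the events $\{X_{j,n}>t_j\}$ have probability near $1$ and the union bound collapses. The bulk contribution $\sum_j n/j\approx n\log_2 n$ must be cancelled by the centering, not union-bounded away.

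The paper's proof keeps the centering inside every term: it writes $S_{n,r}/n-a_{n,\gamma_n}^{(r)}=\sum_{k=r+1}^{n}\bigl(\Psi(U_{kn})/(nU_{kn})-\Psi(k/\gamma_n)/k\bigr)$, peels off only the single top term $k=r+1$ (bounding its tail by $\tfrac{2^{r+1}}{(r+1)!}[(1-\delta)x]^{-(r+1)}$ via the beta density, exactly as you do for your first term), and then proves that the remaining \emph{centered} sum has $L_p$ norm bounded uniformly in $n$ for $p=r+3/2\in(r+1,r+2)$, using H\"older's inequality, the exact negative moments of $U_{kn}$, Mason's inequality for the weighted empirical quantile process, and the dyadic-interval argument from Lemma~\ref{lemma:Ymoment} to control $\Psi(U_{kn})-\Psi(k/\gamma_n)$. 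Markov's inequality applied to that uniform $L_p$ bound produces the second term with a constant depending only on $r$. That uniform moment control is the substance of the theorem and has no counterpart in your argument.
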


\begin{proof}
As before, consider the representation
\[
\frac{S_{n,r}}{n} - \sum_{k=r+1}^n \frac{\Psi(k/\gamma_n)}{k} = 
\sum_{k=r+1}^n \left( \frac{\Psi(U_{n,k})}{n U_{n,k}} - \frac{\Psi(k/\gamma_n)}{k} \right). 
\]
The tail of the first term is of order $x^{-(r+1)}$ (uniformly in $n$). 
We show that the $L_p$ norm of the remaining sum is bounded,
that is for $p \in (r+1, r+2)$
\begin{equation} \label{eq:snr-unibound}
\sup_n \E \left| \sum_{k=r+2}^n \left( \frac{\Psi(U_{n,k})}{n U_{n,k}} -
\frac{\Psi(k/\gamma_n)}{k} \right) \right|^p
\leq C_p < \infty.
\end{equation}
We use the same technique as in the proof of Lemma \ref{lemma:Ymoment},
the only difference is that we need a uniform bound for the 
empirical quantile process. Write
\[
\left| \frac{\Psi(U_{n,k})}{n U_{n,k}} - \frac{\Psi(k/\gamma_n)}{k} \right| \leq  
2 \left| \frac{1}{n U_{n,k}} - \frac{1}{k} \right| +
\frac{\left| \Psi(U_{n,k})  - \Psi(k/\gamma_n) \right|}{k} =: I_k + J_k.
\]
For $P, Q \geq 1$ with $1/P + 1/Q =1$
\begin{equation} \label{eq:uni-bound}
\E \left| \frac{1}{n U_{kn}} - \frac{1}{k} \right|^p \leq k^{-p}
\left( \E | nU_{kn} - k|^{pP} \right)^{1/P} \left( \E (n U_{kn})^{-pQ} \right)^{1/Q}. 
\end{equation}
For the last factor we have
\[
\E (n U_{kn})^{-\alpha} = n^{-\alpha} \frac{\Gamma(n+1) \Gamma(k-\alpha)}
{\Gamma(n+1-\alpha) \Gamma(k)} \leq c_1 \, k^{-\alpha}.
\]
For the first factor in (\ref{eq:uni-bound}) we use Mason's inequality (\cite[Proposition 2]{M}) with
$h \equiv 1$, $\nu_1 = \nu_2 =0$, and we get 
\[
\E \left| U_{k,n} - \frac{k}{n+1} \right|^\alpha \leq c_\alpha \, k^{\alpha/2} n^{-\alpha}. 
\]
Since changing $n+1$ to $n$ makes an error $n^{-2}$, we obtain
\[
\| I_k \|_p \leq c_2 \, k^{-3/2}, 
\]
which is summable. The term $J_k$ can be handled the same way as in the proof
of Lemma \ref{lemma:Ymoment}, and we obtain (\ref{eq:snr-unibound}).

Putting $w = 2^{-\lfloor \log_2 n(x+ \Psi(k/\gamma_n)/k) \rfloor}$,
more precise calculation gives
\[
\begin{split}
& \p \left\{ \frac{\Psi(U_{nk})}{n U_{nk}} - \frac{\Psi(k/\gamma_n)}{k} > x \right\}
= \binom{n}{k} k \int_0^{w}
u^{k-1} (1-u)^{n-k} \d u \\
& \leq \frac{1}{(k-1)!} \int_0^{2/x} y^{k-1} \d y \leq \frac{2^k}{k!} x^{-k}.
\end{split}
\]
Thus, using Markov's inequality combined with (\ref{eq:snr-unibound}), we obtain
for any $\delta > 0$
\[
\begin{split}
\p \bigg\{   \frac{S_{n,r}}{n} - a_{n,\gamma_n}^{(r)} > x \bigg\} 
& \leq \p \left\{ \frac{\Psi(U_{n,r+1})}{n U_{n,r+1}} 
- \frac{\Psi((r+1)/\gamma_n)}{r+1} > (1-\delta) x \right\} \\
& \phantom{=} \, 
+ \p \left\{   \frac{S_{n,r+1}}{n} - a_{n,\gamma_n}^{(r+1)} > \delta x \right\}  \\
& \leq \frac{2^{r+1}}{(r+1)!} {[(1-\delta)x]^{-(r+1)}} + C_{r+3/2} \, (x\delta)^{-(r+3/2)},
\end{split}
\]
and the statement is proved.
\end{proof}

\subsection{Tail behavior of the trimmed limit}

Introduce the notation
\[ 
A_{r,\gamma} = \sum_{k=1}^r \frac{\Psi(k/\gamma)}{k}.
\] 
Using Lemma \ref{lemma:Ymoment} we can determine the tail distribution of the trimmed limit.
The tail behavior of the semistable limit along the subsequence $2^m + c$ ($c$ fix, $m \to \infty$)
was determined by Martin-L\"of \cite[Theorem 4]{ML}. Our proof in the general $r$-trimmed setup
and also the proof of Theorem \ref{th:Snr-tail} use the same idea as Martin-L\"of: conditioning on the
maximum term.

\begin{theorem} \label{th:Gtail}
For any $r = 0,1, \ldots$
\begin{equation} \label{eq:Grtail-as}
\begin{split}
\p \{ Y_{r,\gamma} > x \} & \sim 
\frac{2^{\{ \log_2 (\gamma x) \} (r+1)}}{(r+1)! \, x^{r+1}}
\bigg[ 2^{-r-1} + (2^{r+1} - 1) \\
& \phantom{\sim} \, \times \sum_{\ell=0}^1 2^{-\ell (r+1)}
\p \left\{ Y_{0,\gamma} + A_{r,\gamma} > x\left(1 - 2^{\ell - \{ \log_2 (\gamma x) \}} \right) \right\}
\bigg].
\end{split}
\end{equation}
\end{theorem}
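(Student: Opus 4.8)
The plan is to mimic the proof of Theorem \ref{th:Snr-tail}, replacing the exact St.~Petersburg summands $X_{kn}$ by the Poisson-point representation $\Psi(Z_k/\gamma)/Z_k$ from Lemma \ref{lemma:Ymoment}, and to condition on the size of the largest remaining jump after the $r$ trimmed terms are removed. Recall from \eqref{eq:defYr}--\eqref{2} that
\[
Y_{r,\gamma} = \sum_{k=r+1}^\infty \left( \frac{\Psi(Z_k/\gamma)}{Z_k} - \frac{\Psi(k/\gamma)}{k} \right)
= \left( \sum_{k=r+1}^\infty \frac{\Psi(Z_k/\gamma)}{Z_k} \right) - A_{r+1}^\infty,
\]
where the infinite sum $\sum_{k\ge r+1}\Psi(Z_k/\gamma)/Z_k$ diverges but the centered version converges by Lemma \ref{lemma:Ymoment}. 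The key structural fact, which I would establish first, is that $\{2^{-\lfloor\log_2(Z_k/\gamma)\rfloor}:k\ge 1\}=\{2^{-\lfloor\log_2(Z_k/\gamma)\rfloor}\}$ lists the points of a Poisson process, so that the decreasing rearrangement $\Psi(Z_1/\gamma)/Z_1\ge \Psi(Z_2/\gamma)/Z_2\ge\cdots$ of the jumps has the property that, conditioned on the $(r+1)$-st largest jump equalling $2^j$, the jumps strictly smaller than $2^j$ together with the remaining mass form (a shift of) an independent copy of $Y_{0,\gamma}$-type object, while the number of jumps \emph{equal to} $2^j$ among the first $r+1$ is governed by a Poisson-conditioned-nonzero law exactly as in $r_{j,\gamma}$ and $p_{j,\gamma}$. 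This is the analogue of the conditioning identity
\[
(U_{n,r+2},\dots,U_{n,n}) \overset{\mathcal D}{\longrightarrow} (U_{1,n-r-1},\dots,U_{n-r-1,n-r-1})
\]
used in the proof of Theorem \ref{th:Snr-tail}, and it is where the factors $2^{-r-1}$, $(2^{r+1}-1)$, and the sum $\sum_{\ell=0}^1 2^{-\ell(r+1)}$ will come from.

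Concretely, I would write, with $x\to\infty$ and $j=\lfloor\log_2(\gamma x)\rfloor$ the dyadic level of $x$ (so $\Psi(\gamma x)=\gamma x\,2^{-j}$),
\[
\p\{Y_{r,\gamma}>x\}
= \p\{\text{largest remaining jump}>x\}
+ \sum_{m\ge 1}\p\{\,\cdot\,,\ \text{$(r+1)$-st largest jump}=2^m\},
\]
split the inner sum as $m\le j-1$, $m=j$, and a tail, and show: (i) the event that the $(r+1)$-st largest jump exceeds $x$ has probability $\sim \frac{1}{(r+1)!}(2^{\{\log_2(\gamma x)\}}/x)^{r+1}$ times $2^{-(r+1)}$ — this is the first term in the bracket and follows from the density of the $(r+1)$-st point of the Poisson process, exactly the continuous analogue of \eqref{eq:I3}; (ii) the terms with small $m$ are negligible because then the leftover must be $>x-2^m\ge x/2$ and the moment bound of Lemma \ref{lemma:Ymoment} (or the cruder tail $\p\{Y_{r+1,\gamma}>x/2\}=O(x^{-(r+2)})$) kills them after multiplying by the $O(\ln x)$ number of such $m$; (iii) the boundary term $m=j$ contributes the $(2^{r+1}-1)$ factor (the probability that, among the $r+1$ largest jumps, at least one equals $2^j$, i.e.~the Poisson-conditioned count is $\ge 1$), times the density factor, times $\p\{Y_{0,\gamma}+A_{r,\gamma}>x(1-2^{-\{\log_2(\gamma x)\}})\}$; and (iv) — this is the new wrinkle compared with Theorem \ref{th:Snr-tail} — one must also separate according to whether the $(r+1)$-st jump lies at level $m=j$ \emph{or} at level $m=j-1$, because $x$ itself need not be close to a power of $2$: a jump of size $2^{j-1}$ together with leftover $>x-2^{j-1}=x(1-2^{j-1}/x)=x(1-2^{1-\{\log_2(\gamma x)\}})$ also contributes, which produces the $\ell=1$ summand in \eqref{eq:Grtail-as}. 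The shift $A_{r,\gamma}=\sum_{k=1}^r\Psi(k/\gamma)/k$ enters because $Y_{r,\gamma}$ is centered by $\sum_{k\ge r+1}\Psi(k/\gamma)/k$ whereas $Y_{0,\gamma}$ is centered by $\sum_{k\ge 1}\Psi(k/\gamma)/k$, and $A_{r,\gamma}=A_{r+1}^{\infty}$-difference accounts for the first $r$ recentering terms. Finally (v), to pass from the leftover involving the \emph{whole} remaining Poisson sum to $Y_{0,\gamma}+A_{r,\gamma}$ I use that conditioning on the $(r+1)$-st jump being small (which is forced when $x\to\infty$, since that jump is $\le x$ and the leftover exceeds $x(1-o(1))$ only via the bulk) the remaining normalized sum converges in distribution to $Y_{0,\gamma}$, together with a uniform-integrability / dominated-convergence argument so that the asymptotic equivalence survives division by the $\sim x^{-(r+1)}$ main order; this is the exact analogue of the step ``$\sim \p\{S_{n-r-1}>x(1-2^{-\{\log_2 x\}})\}$'' in \eqref{eq:I2}.

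I expect the main obstacle to be step (iv)–(v) carried out uniformly: unlike the fixed-$n$ situation of Theorem \ref{th:Snr-tail}, where the leftover sum $S_{n-r-1}$ is a genuine finite sum, here the leftover is the tail of a divergent series, so I must be careful that the conditioning ``the $(r+1)$-st largest jump equals $2^m$, everything smaller, plus centering'' really does reassemble into $Y_{0,\gamma}+A_{r,\gamma}$ and not into something with an extra drift from the removed levels; the bookkeeping of the $\Psi(k/\gamma)/k$ centering constants across the two recenterings is the delicate accounting point, and the reason the clean-looking formula \eqref{eq:Grtail-as} has exactly the shift $A_{r,\gamma}$ and the two-term $\ell$-sum. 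A secondary technical point is justifying that the error terms are $o(x^{-(r+1)})$ rather than merely $O(x^{-(r+1)})$, for which I would use Lemma \ref{lemma:Ymoment} (finiteness of $L_p$ for $p<r+1$, hence $\p\{Y_{r+1,\gamma}>y\}=o(y^{-(r+1)})$) exactly where the proof of Theorem \ref{th:Snr-tail} used $\p\{S_{n-r-1}>y\}\to 0$.
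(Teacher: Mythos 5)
Your overall strategy is exactly the paper's: condition on the dyadic level of the $(r+1)$-st largest jump $\Psi(Z_{r+1}/\gamma)/Z_{r+1}$, kill the small-jump levels with Lemma \ref{lemma:Ymoment} via $\p\{Y_{r+1,\gamma}>x/2\}=o(x^{-(r+3/2)})$, and identify the conditional remainder (given $Z_{r+1}\to 0$) as $Y_{0,\gamma}+A_{r,\gamma}$ after the recentering bookkeeping — all of which you describe correctly, including why the shift $A_{r,\gamma}$ appears.

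However, your level bookkeeping in steps (i) and (iv) is off in a way that would derail the computation if carried out literally. With $j=\lfloor\log_2(\gamma x)\rfloor$, the two boundary levels are $j$ and $j+1$, \emph{not} $j$ and $j-1$: a jump at level $j+\ell$ has size $x\,2^{\ell-\{\log_2(\gamma x)\}}$, so $\ell=0$ gives a jump in $(x/2,x]$ and $\ell=1$ a jump in $(x,2x]$, producing exactly the leftover thresholds $x(1-2^{\ell-\{\log_2(\gamma x)\}})$ of \eqref{eq:Grtail-as}; a level-$(j-1)$ jump is at most $x/2$ and its contribution is absorbed into the negligible $I_1$-type term. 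Correspondingly, the constant $2^{-r-1}$ does \emph{not} come from the event that the $(r+1)$-st jump exceeds $x$ — that event is levels $\ge j+1$ and has probability asymptotic to the \emph{full} prefactor $\frac{2^{\{\log_2(\gamma x)\}(r+1)}}{(r+1)!\,x^{r+1}}$ — but from levels $\ge j+2$, i.e.\ jump $>2x$, whose probability is $2^{-(r+1)}$ times the prefactor. For that last piece you also need one ingredient you do not mention: to conclude that a jump $>2x$ forces $Y_{r,\gamma}>x$ with conditional probability $\to 1$, the paper invokes the exponentially small \emph{left} tail $\p\{Y_{0,\gamma}\le -x\}\le e^{-x^2/4}$ (Martin-L\"of's Lemma 1), since the centered remainder could in principle be very negative. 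Once the levels are relabelled ($j$, $j+1$ as boundary; $\ge j+2$ for the constant term) and the left-tail bound is added, your plan coincides with the paper's proof.
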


\begin{proof}
Simple calculation shows that for any $k \geq 1$
\[
\p \left\{ \frac{\Psi(Z_{k} / \gamma)}{Z_{k}} > x \right\}
\sim \frac{1}{k!} \frac{2^{\{ \log_2 (\gamma x) \}k}}{x^{k}},
\]
and by Lemma \ref{lemma:Ymoment}
\begin{equation} \label{eq:I1est}
\p \{ Y_{r+1, \gamma} > x \} = o(x^{-(r+3/2)}).
\end{equation}
We have for $x$ large enough
\[
\begin{split}
\p \left\{ Y_{r,\gamma} > x \right\}
& = \sum_{m = -\infty}^ \infty \p \left\{ Y_{r,\gamma} > x, \,
2^{- \lfloor \log_2 (Z_{r+1} / \gamma ) \rfloor } = 2^m \right\} \\
& = \p \left\{ Y_{r,\gamma} > x, \, 
- \lfloor \log_2 (Z_{r+1} / \gamma ) \rfloor \leq \lfloor \log_2 (\gamma x) \rfloor -1  \right\} \\
& \phantom{=} \, + \p \left\{ Y_{r,\gamma} > x, 
\, C_0 \right\} + \p \left\{ Y_{r,\gamma} > x, \, C_1  \right\} \\
& \phantom{=} \, + \p \left\{Y_{r,\gamma} > x,
- \lfloor \log_2 (Z_{r+1} / \gamma ) \rfloor \geq \lfloor \log_2 (\gamma x) \rfloor +2  \right\} \\
& =: I_1 + I_2 + I_3 + I_4,
\end{split}
\]
where we introduced the notation 
$C_\ell = \{ - \lfloor \log_2 (Z_{r+1} / \gamma ) \rfloor = \lfloor \log_2 (\gamma x) \rfloor + \ell \}$,
for $\ell=0,1$. 
Since $2^{\lfloor \log_2 (\gamma x) \rfloor - 1} \leq \gamma x / 2$, by (\ref{eq:I1est})
\[
I_1 \leq \p \left\{ Y_{r+1,\gamma} > x /2 \right\} =  o(x^{-(r+3/2)}).
\]

Conditioning on $Z_{r+1} \to 0$ we have
\[
Y_{r+1,\gamma} =  \hspace{-3pt} \sum_{k=r+2}^\infty  \hspace{-4pt}
\left( \frac{\Psi(Z_k/\gamma)}{Z_k} - \! \frac{\Psi((k-r-1)/\gamma)}{k-r-1}  \right)
+ \sum_{k=1}^{r+1} \frac{\Psi(k/\gamma)}{k}
\stackrel{\mathcal{D}}{\longrightarrow} Y_{0,\gamma} + A_{r+1,\gamma}.
\]
Therefore, for $I_2, I_3$
\[
\begin{split}
& \p \{ Y_{r,\gamma} > x, \, C_\ell \}
= \p \{ C_\ell  \} \p \left\{ \hspace{-2pt} Y_{r+1,\gamma} -  \hspace{-2pt} \frac{\Psi((r+1)/\gamma)}{r+1}> x 
(1 - 2^{\ell - \{\log_2 (\gamma x) \}})  \Big|  C_\ell  \right\} \\
& \sim  \frac{2^{ (\{ \log_2 (\gamma x) \}-\ell) (r+1)}}{(r+1)! x^{r+1}} \left( 2^{r+1} - 1 \right)
\p \left\{ Y_{0,\gamma} + A_{r,\gamma} > 
x (1 - 2^{\ell - \{\log_2 (\gamma x) \}})    \right\}.
\end{split}
\]
By passing to the limit we used the absolute continuity of $Y_{0,\gamma}$. Finally,
\[
\begin{split}
I_4 & = \p \left\{- \lfloor \log_2 (Z_{r+1} / \gamma ) \rfloor \geq \lfloor \log_2 (\gamma x) \rfloor +2  \right\} \\
& \phantom{=} \, - \p \left\{Y_{r,\gamma} \leq x, - \lfloor \log_2 (Z_{r+1} / \gamma )
\rfloor \geq \lfloor \log_2 (\gamma x) \rfloor +2  \right\}.
\end{split}
\]
Martin-L\"of (Lemma 1 in \cite{ML}) showed that the left tail is exponentially small  
(see also Theorem 5 by Watanabe and Yamamuro \cite{WY} in general),
thus for second term we have
\[
\begin{split}
& \p \left\{Y_{r,\gamma} \leq x, - \lfloor \log_2 (Z_{r+1} / \gamma ) \rfloor \geq \lfloor \log_2 (\gamma x) \rfloor +2  \right\} \\
&\leq \p \{ Y_{r+1, \gamma} \leq - x \} \leq \p \{ Y_{0,\gamma} \leq  -x \} \leq e^{-x^2 /4},
\end{split}
\]
therefore
\[
I_4 \sim   
\frac{2^{(\{ \log_2 (\gamma x) \} -1)(r+1)}}{(r+1)! \, x^{r+1}}.
\]
Combining the asymptotics the theorem follows.
\end{proof}

Even in the untrimmed case our theorem refines the results (for general semistable laws) 
by Watanabe and Yamamuro \cite{WY}.
For $r=0$ according to Theorem \ref{th:Gtail} we have
\begin{equation} \label{eq:r0tail}
\p \{ Y_{0,\gamma} > x \} \sim 
\frac{2^{\{ \log_2 (\gamma x) \}}}{x}
\bigg[ 2^{-1}  +  \sum_{\ell=0}^1 2^{-\ell }
\p \left\{ Y_{0,\gamma} \! > \! x \hspace{-2pt} \left(1 - 2^{\ell - \{ \log_2 (\gamma x) \}} \hspace{-2pt} \right) \hspace{-2pt} \right\}
\hspace{-3pt} \bigg]. 
\end{equation}
By (\ref{eq:Levy-func}) we see that exactly the tail of the L\'evy measure appears, and we have
\[
\frac{\p \{ Y_{0,\gamma} > x \}}{-R_{\gamma}(x)} \! \sim \!
2^{-1}  +  \sum_{\ell=0}^1 2^{-\ell }
\p \left\{ Y_{0,\gamma} > x\left( \! 1 - 2^{\ell - \{ \log_2 (\gamma x) \}} \right) \right\}.
\]
From this we easily obtain
\[
\begin{split}
2^{-1} + 2^{-1} \p \{ Y_{0,\gamma} > 0 \} & = 
\liminf_{x \to \infty} \frac{\p \{ Y_{0,\gamma} > x \}}{-R_{\gamma}(x)} \\
& < \limsup_{x \to \infty} \frac{\p \{ Y_{0,\gamma} > x \}}{-R_{\gamma}(x)} = 1 + \p \{ Y_{0,\gamma} > 0 \},
\end{split}
\]
which is the statement of Theorem 2 in \cite{WY}.
From (\ref{eq:r0tail}) also follows that
\[
1=\liminf_{x \to \infty} x  \p \{ W_\gamma > x \} < 
\limsup_{x \to \infty} x \p \{ W_\gamma > x \}=2,
\]
which is Theorem 3 (i) in \cite{WY}. However, note that we determine the exact asymptotics of the ratio, and not only
the limsup and liminf of it.

\smallskip

If $\{ \log_2 (\gamma x) \} > \delta$ for some $\delta > 0$, then 
$x (1 - 2^{-\{ \log_2 (\gamma x) \}}) \to \infty$, and so the term corresponding to $\ell =0$ in (\ref{eq:Grtail-as})
converges to 0. While if $\{ \log_2 (\gamma x) \} < 1 -\delta$ for some $\delta > 0$, then 
$x (1 - 2^{1-\{ \log_2 (\gamma x) \}}) \to -\infty$, and so the term corresponding to $\ell =1$ in (\ref{eq:Grtail-as})
converges to 1. Thus the asymptotic has a simple form when $\gamma x$ is not close to a power of 2.
In particular for any $\delta \in (0,1/2)$ we have
\[
\lim_{x \to \infty, \delta < \{ \log_2 (\gamma x) \} < 1 - \delta}
\p \{ Y_{r,\gamma} > x \} \frac{x^{r+1}}{2^{\{ \log_2 (\gamma x) \}(r+1)}} = \frac{1}{(r+1)!} \,.
\]
Moreover, for $\gamma x = 2^m +c$
\[
\lim_{m\to \infty} (2^m + c) (1 - 2^{-\{ \log_2 (2^m + c) \}}) \to c,
\]
thus (\ref{eq:Grtail-as}) reads as
\[
\p \{ Y_{r,\gamma} > (2^m + c)/ \gamma \} \sim \frac{\gamma^{r+1} 2^{-m(r+1)}}{(r+1)!}
\left[1 + (2^{r+1} - 1) \p \{ Y_{0,\gamma} + A_{r,\gamma} > c/\gamma \} \right],
\]
as $m \to \infty$. In the untrimmed case ($r=0$) for $\gamma=1$ this gives
\[
\p \{ Y_{0,1} > 2^m + c \} \sim 2^{-m} \left[1 + \p \{ Y_{0,1} > c \} \right], \quad
\text{as } m \to \infty,
\]
which is exactly Martin-L\"of's asymptotics \cite[Theorem 4, formula (9)]{ML}.

\begin{remark}
The idea of our proof of Theorem \ref{th:trimlim} and the representation of the limit
goes back to LePage, Woodroofe and Zinn.

Let $Y, Y_1, Y_2, \ldots $ be i.i.d.~random variables from the domain of attraction of
an $\alpha$-stable law, $\alpha \in (0,2)$.
That is
\[
\p \{ |Y| > y \} = \frac{\ell(y)}{y^{\alpha}}, \ \,
\lim_{y \to \infty} \frac{\p \{ Y > y \} }{\p \{ |Y| > y \}} = p, \ \,
\lim_{y \to \infty} \frac{\p \{ Y < -y \} }{\p \{ |Y| > y \}} = q,
\]
with $p,q \in [0,1]$, $p + q =1$. Let $S_n$ denote the partial sum, and
let $a_n > 0$ and $b_n$ such that $(S_n - nb_n)/a_n$ converges in distribution to
an $\alpha$-stable law $S$. Let $|Y_{1,n}| \geq |Y_{2,n}| \geq \ldots \geq |Y_{n,n}|$
denote the monotone reordering of $|Y_1|, \ldots , |Y_n|$. LePage, Woodroofe and Zinn
\cite[Theorem 1']{LWZ} proved that
the limit has the representation
\[
S= \sum_{k=1}^\infty \left( \delta_k Z_k^{-1/\alpha} -
(p-q) \E Z_k^{-1/\alpha}  I(Z_k^{-1/\alpha}  < 1) \right),
\]
where $\delta_1, \delta_2, \ldots $ are i.i.d.~$\pm 1$ random variables with
$\p \{ \delta = 1 \} = p$, and independently of $\delta$'s $E_1, E_2, \ldots$
are i.i.d.~Exp(1) random variables and $Z_k = E_1 + \ldots + E_k$.
Moreover,
\[
\left( \frac{S_n - nb_n}{a_n},
\frac{1}{a_n} \left( |Y_{1,n}|, |Y_{2,n}|, \ldots, |Y_{n,n}| \right) \right)
\stackrel{\mathcal{D}}{\longrightarrow}
\left( S, ( Z_1^{-1/\alpha}, Z_2^{-1/\alpha}, \ldots ) \right).
\]
In case of the two-sided (symmetric) version of the St.~Petersburg game
similar results were obtained by Berkes, Horv\'ath and Schauer
\cite[Corollary 1.4]{BHS}.
\end{remark}

\section{The generalized St.~Petersburg game} \label{sect:general}

In this last section we consider some of the previous results in a more general
setup, in the case of the so-called generalized St.~Petersburg game.
Since the proofs are similar to the proofs in the classical case, we omit them.

In this setup Peter tosses a possibly biased coin, where the probability
of heads at each throw is $p=1-q$, and Paul's winning is
$q^{-k/\alpha}$, if the first heads appears on the $k^{\text{th}}$ toss,
where $k\in\N = \{1,2,\ldots\}$, and $\alpha > 0$ is a payoff
parameter. The classical St.~Petersburg game
corresponds to $\alpha = 1$ and $p = 1/2$.
If $X$ denotes Paul's winning in this St.~Petersburg$(\alpha, p)$ game, then
$\p\left\{ X = q^{-k/\alpha} \right\}  = q^{k-1} p$, $k\in\N$.
In this section $X_1, X_2, \ldots$ are i.i.d.~St.~Petersburg$(\alpha, p)$ random
variables, and $S_n$, $X_n^*$, and $S_{n,r}$ stands for the partial sum, partial maximum,
and the $r$-trimmed sum, respectively.

For $\alpha \geq 2$ the 
generalized St.~Petersburg distribution belongs to the domain of attraction of
the normal law. 

\smallskip

For general $\alpha, p$ we do not have a closed formula for the probabilities
$\p \{ S_n > x \}$. Nevertheless, it turns out that the generalized St.~Petersburg
distributions are not subexponential for any choice of the parameters.

\begin{lemma} \label{lemma:subexp}
Let $\alpha > 0$.
Let $X_1, X_2$ be independent St.~Petersburg$(\alpha, p)$ random variables. Then
\[
2 = \liminf_{x \to \infty} \frac{\p \{ X_1 + X_2 > x \} }{\p \{ X_1 > x \}  } <
\limsup_{x \to \infty} \frac{\p \{ X_1 + X_2 > x \} }{\p \{ X_1 > x \}  } = 2 q^{-1}.
\]
\end{lemma}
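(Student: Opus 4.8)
The plan is to split the event $\{X_1+X_2>x\}$ according to whether the larger of the two variables already exceeds $x$:
\[
\p\{X_1+X_2>x\}=\p\{\max(X_1,X_2)>x\}+\p\{X_1+X_2>x,\ \max(X_1,X_2)\le x\}.
\]
Writing $\overline{F}(x)=\p\{X>x\}$, the first term equals $2\overline{F}(x)-\overline{F}(x)^2\sim 2\overline{F}(x)$; since the second term is a probability, this alone gives $\p\{X_1+X_2>x\}\ge(2-\overline{F}(x))\overline{F}(x)$, hence $\liminf\ge 2$, and whenever $x$ lies well inside a block $[q^{-m/\alpha},q^{-(m+1)/\alpha})$ it will be the whole asymptotics. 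The real work is to show that the second term is at most $\tfrac{2(1-q)}{q}\,\overline{F}(x)\,(1+o(1))$, with a matching lower bound as $x$ approaches an atom $q^{-m/\alpha}$ from above.

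For the second term I condition on the larger variable. Let $a^{\star}(x)$ be the largest atom $q^{-i/\alpha}$ with $q^{-i/\alpha}\le x$, so $\overline{F}(x)=\overline{F}(a^{\star}(x))$. If $\max(X_1,X_2)\le x$ and $X_1+X_2>x$ then the larger variable lies in $(x/2,x]$, which contains only $O(1)$ atoms; splitting by which variable is larger and discarding the diagonal $\sum_a\p\{X_1=a\}^2=O(\overline{F}(x)^2)$ gives
\[
\p\{X_1+X_2>x,\ \max\le x\}\le 2\sum_{a}\p\{X_1=a\}\,\overline{F}(x-a)+O(\overline{F}(x)^2),
\]
the sum running over atoms $a\in(x/2,x]$, and a matching lower bound holds with only the $a=a^{\star}(x)$ term on the right. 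Two elementary facts finish it: $\p\{X_1=q^{-i/\alpha}\}=\tfrac{1-q}{q}\,\overline{F}(q^{-i/\alpha})$, and $\overline{F}(cx)\le C(c)\,\overline{F}(x)$ for every fixed $c>0$ uniformly in large $x$ (a quasi-regular-variation bound of index $-\alpha$, read off from $\overline{F}(q^{-m/\alpha})=q^m$). For an atom $a\in(x/2,x]$ strictly below $a^{\star}(x)$ we have $a\le q^{1/\alpha}x$, so $x-a\ge(1-q^{1/\alpha})x$; then $\overline{F}(a)$ and $\overline{F}(x-a)$ are both $\Theta(\overline{F}(x))$ (second fact with $c=1/2$ and $c=1-q^{1/\alpha}$), so that summand is $O(\overline{F}(x)^2)$. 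Only $a=a^{\star}(x)$ survives, contributing $2\,\tfrac{1-q}{q}\,\overline{F}(x)\,\overline{F}(x-a^{\star}(x))\le 2\,\tfrac{1-q}{q}\,\overline{F}(x)$ since $\overline{F}\le 1$.

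Adding the two terms yields $\p\{X_1+X_2>x\}\le\bigl(2+\tfrac{2(1-q)}{q}\bigr)\overline{F}(x)(1+o(1))=2q^{-1}\overline{F}(x)(1+o(1))$, hence $\limsup\le 2q^{-1}$. The two bounds are attained along explicit sequences. For $x_m=q^{-m/\alpha}$ one has $a^{\star}(x_m)=x_m$, hence $\overline{F}(x_m-a^{\star}(x_m))=\overline{F}(0)=1$, the second term equals $2\tfrac{1-q}{q}\overline{F}(x_m)(1+o(1))$, and the ratio tends to $2q^{-1}$. For $x_m=q^{-(m+1/2)/\alpha}$ the gap $x_m-a^{\star}(x_m)=q^{-m/\alpha}(q^{-1/(2\alpha)}-1)$ tends to $\infty$, so $\overline{F}(x_m-a^{\star}(x_m))\to 0$, the second term is $o(\overline{F}(x_m))$, and the ratio tends to $2$. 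The only genuinely delicate point is the quasi-regular-variation bound together with the observation that $x-a$ is a fixed positive fraction of $x$ for every $a$ below the top atom: this is precisely what demotes every summand except $a=a^{\star}(x)$ to second order, and it has to be uniform in $x$ rather than along a subsequence. Minor care is needed at the extreme values of $q^{1/\alpha}$ (near $1/2$, or near $1$), where the number of atoms in $(x/2,x]$ and the relative size of $1-q^{1/\alpha}$ change; but all the quantities involved are constants depending only on $\alpha$ and $q$, so the estimates go through.
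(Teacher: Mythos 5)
Your proof is correct. The paper actually omits the proof of this lemma entirely (``The proof is simple, so we omit it''), deferring the liminf to the general Foss--Korshunov result and, in the classical case $\alpha=1$, $p=1/2$, only exhibiting the explicit formula for $\p\{X_1+X_2>2^k+2^\ell\}$ that yields (7). Your argument --- decomposing according to whether the maximum exceeds $x$, then conditioning on the value of the larger summand and showing that only the top atom $a^\star(x)$ contributes at first order --- is a self-contained version of exactly the mechanism the authors use elsewhere (the proof of Theorem~\ref{th:Snr-tail} conditions on the $(r+1)$-st order statistic in the same way), and it correctly produces the constant $2+2(1-q)/q=2q^{-1}$, with the extremal sequences $x_m=q^{-m/\alpha}$ (limsup) and $x_m=q^{-(m+1/2)/\alpha}$ (liminf). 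The two points you flag as delicate --- the uniform quasi-regular-variation bound $\overline F(cx)\le C(c)\overline F(x)$ and the fact that $x-a\ge(1-q^{1/\alpha})x$ for every atom $a$ strictly below $a^\star(x)$ --- are indeed what demotes the non-top atoms to $O(\overline F(x)^2)$, and both hold uniformly with constants depending only on $\alpha$ and $q$. A small bonus of your route is that it reproves the liminf statement directly rather than quoting the Foss--Korshunov theorem for heavy-tailed distributions.
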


The liminf result is a consequence of a recent result by
Foss and Korshunov \cite{FK}, as they proved that
for any heavy-tailed distribution the liminf is 2.
The proof is simple, so we omit it.

By the definition of subexponential distributions in (\ref{eq:subexp-def})
the consequence of the lemma is that there is no subexponential generalized
St.~Petersburg random variable.

The tail behavior of $S_{n,r}$ in the general setup is the
following. The proof is identical to the proof in the
classical case.

\begin{theorem}
Let $\alpha > 0$. For any $n > r$
\[
\begin{split}
\p \left\{ S_{n,r} >  x \right\} & \sim 
\binom{n}{r+1} \frac{q^{ - (r+1) \{ \log_{q^{-1}} x^\alpha \}}}{x^{(r+1)\alpha}} \\
& \phantom{\sim} \, \times 
\left(1 + (q^{-r-1} -1) \p \{ S_{n-r-1} > x (1 - q^{\{ \log_{q^{-1}} x^\alpha \} /\alpha}) \} \right).
\end{split}
\]
\end{theorem}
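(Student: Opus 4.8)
The plan is to follow exactly the structure of the proof of Theorem~\ref{th:Snr-tail}, replacing the dyadic quantities by their $q^{-1}$-adic analogues. First I would record the relevant facts about the generalized St.~Petersburg$(\alpha,p)$ distribution: one has $\p\{X = q^{-k/\alpha}\} = q^{k-1}p$, so $\p\{X > q^{-k/\alpha}\} = q^{k}$, and the quantile function has the form $Q(s) = q^{-\lceil \log_{q^{-1}}(1-s)^{-1}\rceil/\alpha}$ for $s\in(0,1)$. Introducing the $q^{-1}$-adic periodic function $\Psi_{q,\alpha}(u) = q^{-\{\log_{q^{-1}} u\}/\alpha}$ and the quantile representation $(X_{1n},\dots,X_{nn}) \stackrel{\mathcal D}{=} (\Psi_{q,\alpha}(U_{1n})\,U_{1n}^{-1/\alpha},\dots)$ in terms of the order statistics of an i.i.d.\ Uniform$(0,1)$ sample, I would note the two elementary equivalences: $\Psi_{q,\alpha}(u) u^{-1/\alpha} = q^{-\lfloor \log_{q^{-1}} u\rfloor/\alpha}$, and $q^{-\lfloor \log_{q^{-1}} u\rfloor/\alpha} > x$ iff $u < q^{\lceil \log_{q^{-1}} x^\alpha\rceil}$. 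This gives, via the density $\binom{n}{r+1}(r+1)u^r(1-u)^{n-r-1}$ of $U_{r+1,n}$, the asymptotics
\[
\p\Big\{ \Psi_{q,\alpha}(U_{r+1,n}) U_{r+1,n}^{-1/\alpha} > x \Big\}
\sim \binom{n}{r+1} \frac{q^{-(r+1)\{\log_{q^{-1}} x^\alpha\}}}{x^{(r+1)\alpha}},
\]
which is the base term on the right-hand side.

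Next I would decompose $\p\{S_{n,r} > x\}$ according to the value of the $(r+1)$-st largest observation $X_{r+1,n}$, which takes values in the grid $\{q^{-m/\alpha} : m\ge 1\}$. Writing $M = \lceil \log_{q^{-1}} x^\alpha\rceil$, the same three-way split as in Theorem~\ref{th:Snr-tail} applies: the contribution from $X_{r+1,n} = q^{-m/\alpha}$ with $m \le M-1$ (call it $I_1$), the contribution from $X_{r+1,n} = q^{-\lfloor \log_{q^{-1}} x^\alpha\rfloor/\alpha}$, the single grid point just below $x$ (call it $I_2$), and the contribution $I_3$ from $X_{r+1,n} > x$. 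For $I_1$ one uses that $m \le M-1$ forces the subtracted value $q^{-m/\alpha} \le x\,q^{\{\log_{q^{-1}}x^\alpha\}/\alpha}\cdot q^{1/\alpha}$, which is a fixed fraction of $x$ strictly less than $1$ (bounded away from $1$ provided we are not exactly at a power of $q^{-1}$, but the crude bound $q^{-m/\alpha} \le q^{-(M-1)/\alpha}$ shows $x - q^{-m/\alpha} \ge cx$ for a suitable $c>0$ after summing), so $I_1 \le (\text{number of terms}) \cdot \p\{S_{n,r+1} > cx\} = O(x^{-(r+2)\alpha}\log x)$, negligible compared with $x^{-(r+1)\alpha}$. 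The term $I_3$ is exactly the base term computed above. For $I_2$ I would condition on $X_{r+1,n} = q^{-\lfloor \log_{q^{-1}}x^\alpha\rfloor/\alpha}$, use that $\p\{U_{r+1,n} = \text{this value's preimage range}\}$ gives the factor $\binom{n}{r+1}(q^{-r-1}-1)\,q^{-(r+1)\{\log_{q^{-1}}x^\alpha\}}x^{-(r+1)\alpha}$ (the $(q^{-r-1}-1)$ coming from the probability, conditional on $U_{r+1,n}$ lying in the appropriate dyadic-type cell, that all of $U_{1,n},\dots,U_{r,n}$ also fall below the next grid boundary), together with the standard fact that conditionally on $U_{r+1,n}\to 0$ the remaining order statistics $(U_{r+2,n},\dots,U_{nn})$ converge in distribution to an i.i.d.\ Uniform$(0,1)$ ordered sample of size $n-r-1$; this produces the factor $\p\{S_{n-r-1} > x(1 - q^{\{\log_{q^{-1}}x^\alpha\}/\alpha})\}$ and the displayed asymptotic for $I_2$. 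Combining $I_1 + I_2 + I_3$ yields the claimed equivalence.

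The main obstacle, as in the classical case, is bookkeeping with the $q^{-1}$-adic floor/ceiling functions: making sure that the cell of $U_{r+1,n}$ on which $\Psi_{q,\alpha}(U_{r+1,n})U_{r+1,n}^{-1/\alpha}$ equals a given grid value $q^{-m/\alpha}$ is correctly identified, and that the residual mass $x - q^{-\lfloor \log_{q^{-1}}x^\alpha\rfloor/\alpha}$ really equals $x(1 - q^{\{\log_{q^{-1}}x^\alpha\}/\alpha})$, so that the argument of $\p\{S_{n-r-1}>\cdot\}$ matches the statement. A secondary point, purely technical, is the justification of passing to the limit inside $I_2$: this requires the weak convergence of the conditional order statistics plus the fact that the distribution function of $S_{n-r-1}$ has no atoms at the (random but, in the limit, deterministic) threshold; since the generalized St.~Petersburg distribution is discrete, one should phrase the limit carefully, e.g.\ along the subsequence $x = q^{-m/\alpha}$ or by noting that the threshold $x(1-q^{\{\log_{q^{-1}}x^\alpha\}/\alpha})$ avoids the atoms of $S_{n-r-1}$ for all but countably many $x$, which suffices for the stated $\sim$. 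Everything else is a routine transcription of the classical argument, which is why the paper omits the details.
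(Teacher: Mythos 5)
Your proposal is correct and follows exactly the route the paper intends: the paper gives no separate proof for this theorem, stating only that it is identical to the proof of Theorem~\ref{th:Snr-tail}, and your $q^{-1}$-adic transcription of the quantile representation, the $I_1+I_2+I_3$ decomposition by the value of the $(r+1)$-st order statistic, and the conditional convergence of the remaining order statistics reproduce that argument faithfully, including the correct identification of the residual threshold $x(1-q^{\{\log_{q^{-1}}x^\alpha\}/\alpha})$ and the factor $q^{-r-1}-1$. The only blemishes are transcription slips in the intermediate identities (it should read $\Psi_{q,\alpha}(u)u^{-1/\alpha}=q^{\lfloor\log_{q^{-1}}u\rfloor/\alpha}$ and the threshold $u<q^{\lfloor\log_{q^{-1}}x^\alpha\rfloor}$, with a floor rather than a ceiling), which do not affect the final asymptotics you state.
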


\noindent \textbf{Acknowledgement.}
Berkes's research was supported by the grants FWF P24302-N18 and OTKA K108615.
Kevei's research was funded by a postdoctoral fellowship
of the Alexander von Humboldt Foundation.


\begin{thebibliography}{99}


\bibitem{Adler} Adler, A.
\newblock Generalized one-sided laws of iterated logarithm for
random variables barely with or without finite mean.
\newblock {\em J. Theoret. Probab.}, {\bf 3}, 587--597, 1990.



\bibitem{BCsCs}
Berkes, I., Cs\'aki, E., and Cs\"org\H{o}, S.
\newblock Almost sure limit theorems for the St.~Petersburg game.
\newblock \textit{Statist. Probab. Lett.}, \textbf{45} (1), 23--30, 1999.


\bibitem{BHS}
Berkes, I., Horv\'ath, L., Schauer, J.
\newblock Non-central limit theorems for random selections.
\newblock \textit{Probab. Theory Relat. Fields}, \textbf{147}, 449--479,
2010.

\bibitem{BFM}
Buchmann, B., Fan, Y., Maller, R.
\newblock Distributional Representations and Dominance of a L\'evy Process over
its Maximal Jump Process.
\newblock \textit{Bernoulli}, to appear.


\bibitem{ChRo61}
Chow, Y. S. and Robbins, H.
\newblock On sums of independent random variables with infinite moments and "fair" games.
\newblock \textit{Proc. Nat. Acad. Sci. USA}, 47:330--335, 1961.


\bibitem{Cs02}
Cs\"org\H{o}, S.
\newblock Rates of merge in generalized St. Petersburg games.
\newblock \textit{Acta Sci. Math. (Szeged)}, 68:815--847, 2002.



\bibitem{CsS}
Cs\"org\H{o}, S. and Simons, G. 
\newblock On Steinhaus' resolution of the St.~Petersburg paradox.
\newblock \textit{Probab. Math. Stat.} 14, 157--172, 1993.



\bibitem{CsSi96}
Cs\"org\H{o}, S. and Simons, G.
\newblock A strong law of large numbers for trimmed sums,
with applications to generalized St. Petersburg games.
\newblock \textit{Statist. Probab. Lett.}, 26:65--73, 1996.




\bibitem{FK}
Foss, S., and Korshunov, D.
\newblock Lower limits and equivalences for convolution tails.
\newblock \textit{Ann. Probab.}, \textbf{35}, 366--383, 2007.




\bibitem{FuGyKe14} Fukker, G., Gy\"orfi, L. and Kevei, P.
\newblock Asymptotic behavior of the generalized
St.~Petersburg sum conditioned on its maximum.
\newblock \textit{Bernoulli}, to appear.



%

\bibitem{Goldie}
Goldie, C. M.
\newblock Subexponential Distributions and Dominated-Variation Tails.
\newblock \textit{Journal of Applied Probability},
\textbf{15}, No. 2, pp. 440--442, 1978.
%
%
\bibitem{GoldieKlupp}
Goldie, C. M., and Kl\"uppelberg, C.
\newblock Subexponential distributions.
\newblock In: \textit{A practical guide to heavy tails}.
pp. 435--459, Birkh\"auser Boston Inc. Cambridge, MA, USA, 1998.




\bibitem{GML} Gut, A., and Martin-L\"of, A.
\newblock A Maxtrimmed St. Petersburg Game.
\newblock \textit{J. Theor. Probab.}, to appear.




\bibitem{K}
Kl\"uppelberg, C.
\newblock Asymptotic ordering of distribution functions on convolution semigroup.
\newblock \textit{Semigroup Forum}, \textbf{40}, 77--92, 1990.




\bibitem{KW}
Kern, P. and Wedrich, L.
\newblock Dimension results related to the St. Petersburg game.
\newblock \textit{Probab. Math. Stat.} 34, 97--117, 2014.




\bibitem{LWZ}
LePage, R, Woodroofe, M., and Zinn, J.
\newblock Convergence to a stable distribution via order statistics.
\newblock \textit{Annals of Probability}, \textbf{9}, 624--632, 1981.



\bibitem{ML} Martin-L\"of, A.
\newblock A limit theorem which clarifies the `Petersburg paradox'.
\newblock \textit{J. Appl. Probab.} \textbf{22}, 634--643, 1985.



\bibitem{M} Mason, D.M.
\newblock Weak Convergence of the Weighted Empirical Quantile Process in $L^2(0,1)$.
\newblock \textit{Annals of Probability}, \textbf{12}, 243--255, 1984.


\bibitem{Petrov} Petrov, V.V.
\newblock \textit{Limit theorems of probability theory}.
\newblock Oxford University Press, New York, 1995.


\bibitem{Sato}
Sato, K. 
\newblock \textit{L\'evy Processes and Infinitely Divisible Distributions}.
\newblock Cambridge Studies in Advanced Mathematics 68, Cambridge University Press. 1999.




\bibitem{SW}
Shimura, T.,  and Watanabe, T.
\newblock Infinite divisibility and generalized subexponentiality.
\newblock \textit{Bernoulli}, \textbf{11} (3), 445--469, 2005.


\bibitem{SvH}
Steutel, F. W., and van Harn, K.
\newblock \textit{Infinite divisibility of probability distributions on the real line}.
\newblock Marcel Dekker, New York, 2004.

\bibitem{WY}
Watanabe, T., and Yamamuro, K.
\newblock Tail behaviors of semi-stable distributions.
\newblock \textit{Journal of Mathematical Analysis and Applications},
\textbf{393}, No.~1. pp. 108--121, 2012.



\end{thebibliography}
\end{document}